\newtheorem{theorem}[subsection]{Theorem}
\newtheorem{lemma}[subsection]{Lemma}
\newtheorem{proposition}[subsection]{Proposition}
\newtheorem{convention}[subsection]{Convention}
\newtheorem{definition}[subsection]{Definition}
\newcommand{\red}[1]{{{#1}}}
\newcommand{\br}{\mathrm{Br}}
\newcommand{\brgp}[1]{\mathrm{Br}(#1)}
\newcommand{\brgpu}[1]{\mathrm{Br}_1(#1)}
\newcommand{\brgpa}[1]{\mathrm{Br}_a(#1)}
\newcommand{\cor}[1]{\mathrm{Cor}_{#1}}
\newcommand{\zcycle}[1]{\mathrm{Z}_0^{#1}}
\newcommand{\zcyclef}[2]{\mathrm{Z}_{0,#2}^{#1}}
\newcommand{\prodf}[1]{\mathop{\prod}\limits_{#1}}
\newcommand{\sumf}[1]{\mathop{\sum}\limits_{#1}}
\newcommand{\inv}{{\mathrm{inv}}}
\newcommand{\reff}[1]{{(\ref{#1})}}
\newcommand{\resp}[1]{(resp. #1)}
\newcommand{\spec}[1]{\mathrm{Spec}(#1)}
\newcommand{\cohomorf}[2]{\mathrm{H}^{#1}(#2)}
\newcommand{\cohomorc}[2]{\mathrm{H}_{c}^{#1}(#2)}
\newcommand{\ET}{{\text{\'etale}}}
\newcommand{\et}{{\text{\'et}}}
\newcommand{\image}[1]{\mathrm{Im}(#1)}
\newcommand{\dsum}{{\oplus}}
\newcommand{\Dsum}{{\bigoplus}}
\newcommand{\Dsumf}[1]{\mathop{\bigoplus}\limits_{#1}}
\newcommand{\kernel}[1]{\mathrm{Ker}(#1)}
\newcommand{\cA}{\mathcal{A}}
\newcommand{\cC}{\mathcal{C}}
\newcommand{\cF}{\mathcal{F}}
\newcommand{\cG}{\mathcal{G}}
\renewcommand{\cH}{\mathcal{H}}
\newcommand{\cO}{\mathcal{O}}
\newcommand{\cT}{\mathcal{T}}
\newcommand{\cX}{\mathcal{X}}
\newcommand{\vA}{\mathbf{A}}
\newcommand{\bC}{\mathbb{C}}
\newcommand{\bG}{\mathbb{G}}
\newcommand{\bQ}{\mathbb{Q}}
\newcommand{\bZ}{\mathbb{Z}}
\newcommand{\dercat}[1]{\mathrm{D}(#1)}
\renewcommand{\hom}[1]{\mathrm{Hom}_{#1}}
\DeclareSymbolFont{cyrletters}{OT2}{wncyr}{m}{n}
\DeclareMathSymbol{\Sha}{\mathalpha}{cyrletters}{"58}
\newcommand{\PTcouple}[1]{\langle#1\rangle_{\mathrm{PT}}}
\newcommand{\BMcouple}[1]{\langle#1\rangle_{\mathrm{BM}}}
\begin{document}

	\author{Diego Izquierdo, Yongqi Liang, Hui Zhang}
	\title[Product of Brauer--Manin Obstructions]{\bf{{Product of Brauer--Manin obstructions for 0-cycles over number fields and function fields}}}
	
	\setcounter{secnumdepth}{4}
	
	\maketitle 
	
\begin{abstract}
	It is conjectured that the Brauer--Manin obstruction is expected to control the existence of 0-cycles of degree 1 on smooth proper varieties over number fields. In this paper, we prove that the existence of Brauer--Manin obstruction to Hasse principle for 0-cycles of degree 1 on the product of smooth (non-necessarily proper) varieties is equivalent to the simultaneous existence of such an obstruction on each factor.
	
	\red{We also prove an analogous statement for smooth varieties defined over function fields of $\bC((t))$-curves.}

\end{abstract}

\section{Introduction}
	Consider a number field $k$ or the function field of a projective smooth geometrically integral $\bC((t))$-curve. We will call the latter case the function field case in this article. For simplification, all varieties appearing in this section will be smooth and geometrically integral. 
	
	For a class of $k$-varieties $\cF$, we say that the class $\cF$ satisfies Hasse principle if for each $k$-variety $V\in\cF$, $V(\vA_k)\not=\emptyset$ implies $V(k)\not=\emptyset$. When $V(\vA_k)\not=\emptyset$ and $V(k)=\emptyset$, one may want to explain the failure of Hasse principle. To do so, Manin introduced a subset $V(\vA_k)^\br$ of $V(\vA_k)$. The set $V(\vA_k)^\br$ is defined as the left-kernel of the Brauer--Manin pairing 
	\begin{equation}
		\langle\cdot,\cdot\rangle:V(\vA_k)\times\brgp{V}\rightarrow\bQ/\bZ.
	\end{equation}Here $\brgp{V}=\mathrm{H}_{\et}^2(V,\bG_m)$ is the cohomological Brauer group of $V$. Using the Brauer--Hasse--Noether exact sequecne when $k$ is a number field and by Weil's reciprocity law when $k$ is a function field, one can prove the inclusions
	\begin{equation}
		V(k)\subseteq V(\vA_k)^\br\subseteq V(\vA_k).
	\end{equation}Hence, if $V(\vA_k)^\br$ is empty, then this will explain why $V(k)$ is empty. From this perspective, if for some $k$-variety $V$, we have $V(\vA_k)^\br=\emptyset$ and $V(\vA_k)\not=\emptyset$, then we say that the Brauer--Manin obstruction to Hasse principle on $V$ exists.

	{It is natural to consider the Brauer--Manin obstruction on the product $Z=X\times_k Y$ of two projective varieties.} Skorobogatov and Zarhin, in their paper \cite[Thm. C]{SZ14}, proved that when $k$ is a number field, {one has $Z(\vA_k)^\br=X(\vA_k)^\br\times Y(\vA_k)^\br$. As a direct corollary, one obtains that} the existence of Brauer--Manin obstruction to Hasse principle for rational points on $Z$ is equivalent to the existence of the same obstruction on $X$ and $Y$ at the same time. The {case without projectivity assumption} is due to Chang Lv, see \cite[Thm. 3.1]{Lv20}. \red{In this paper we are going to prove a version of this statement for 0-cycles of degree 1.} 
	
	Following Manin, Colliot-Th{\'e}l{\`e}ne introduced a similar obstruction associated to 0-cycles in his paper \cite{CT95}. The set $V(\vA_k)$ is replaced by adelic 0-cycles on $V$. One can derive a similar Brauer--Manin pairing between adelic 0-cycles and the Brauer group, so that, we have a set $\zcyclef{}{\vA}(V)^\br$, which turns out to be a group, containing $V(\vA_k)^\br$ as a subset. One can also prove that $\zcyclef{}{\vA}(V)^\br$ contains the group $Z_0(V)$ of global 0-cycles of $V$. \red{We also denote by $\zcyclef{r}{\vA}(V)^\br$ the subset consisting of adelic 0-cycles of degree $r$ at each place.} The group $\zcyclef{}{\vA}(V)^\br$ \resp{the subset $\zcyclef{r}{\vA}(V)^\br$} allows one to define a Brauer--Manin obstruction to Hasse principle for 0-cycles \resp{of degree $r$} similarly to the case of rational points. For a $k$-variety $V$, we say that the Brauer--Manin obstruction for 0-cycles on $V$ \resp{of degree $1$} exists if $\zcyclef{}{\vA}(V)^\br=\emptyset$ \resp{if $\zcyclef{1}{\vA}(V)^\br=\emptyset$}.

	Liang considered weak approximation property in his paper \cite[Thm. 3.1]{Liang23}. Roughly speaking, \red{he proved that, for rationally connected varieties $X$ and $Y$ defined over a number field $k$,  the product $Z=X\times_k Y$ satisfies weak approximation with Brauer--Manin obstruction for 0-cycles if and only if both $X$ and $Y$ satisfy the same property.}
	
	\red{In this paper, we consider the existence of Brauer--Manin obstruction to Hasse principle for 0-cycles on the products of varieties. Given a 0-cycle $x$ on $X$ and a 0-cycle $y$ on $Y$, one can determine a 0-cycle $z$ on $Z$ called product of $x$ and $y$. Precisely, if $x$ and $y$ are simply closed points, then $z$ is the 0-cycle assocated to the fintie $k$-scheme  $\spec{k(x)\otimes_kk(y)}$. For general 0-cycles, one extends the product bilinearly and obtains a bilinear map
		\begin{equation}
			\zcycle{}(X)\times\zcycle{}(Y)\rightarrow\zcycle{}(Z).
		\end{equation}Furthermore, this map gives rise to the bilinear map of adelic 0-cycles
		\begin{equation}
			\Phi:\zcyclef{}{\vA}(X)\times\zcyclef{}\vA(Y)\rightarrow\zcyclef{}\vA(Z).
		\end{equation}We denote this product map $\Phi$.
		
		 We will claim that $\Phi$ in fact induces a product map on Brauer--Manin subsets and it turns out to be a section of the map $\zcyclef{1}\vA(Z)\rightarrow\zcyclef{1}\vA(X)\times\zcyclef{1}\vA(Y)$ induced from canonical projections onto each factor. And as an immediate corollary, one obtains that the existence of such an obstruction on the product of varieties is equivalent to the similtaneous existence of such an obstruction on each factor. More precisely, we state our main result as the follows.}

	\begin{theorem} \label{main2}
	Let $k$ be either a number field or a function field, let $X$ and $Y$ be smooth and geometrically integral varieties over $k$ and $Z=X\times_kY$ be the product of $X$ and $Y$, Denote $p$ and $q$ the projections	
	\begin{equation}
		\xymatrix{
			Y&Z\ar[r]^{p}\ar[l]_{q}&X}.
	\end{equation}
	
	Then \red{the section map
		\begin{equation}
			\xymatrix{
				\zcyclef{1}\vA(X)\times\zcyclef{1}\vA(Y)\ar[r]^{\qquad\Phi}&\zcyclef{1}\vA(Z)\ar@{->>}[r] &\zcyclef{1}\vA(X)\times\zcyclef{1}\vA(Y)}
		\end{equation}gives rise to a section map on Brauer--Manin subsets\begin{equation}
		\xymatrix{
			\zcyclef{1}\vA(X)^\br\times\zcyclef{1}\vA(Y)^\br\ar[r]^{\qquad\Phi}&\zcyclef{1}\vA(Z)^\br\ar@{->>}[r] &\zcyclef{1}\vA(X)^\br\times\zcyclef{1}\vA(Y)^\br.}
			\end{equation}}
	
\end{theorem}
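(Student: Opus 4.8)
The plan is to reduce the entire statement to a single assertion, namely that $\Phi$ carries $\zcyclef{1}\vA(X)^\br\times\zcyclef{1}\vA(Y)^\br$ into $\zcyclef{1}\vA(Z)^\br$, and to deduce everything else from elementary degree bookkeeping together with the projection formula. First I would record the behaviour of $\Phi$ under pushforward. If $x=(x_v)_v$ and $y=(y_v)_v$ are adelic 0-cycles of degree $1$ at each place, then for closed points $P\in X_{k_v}$ and $Q\in Y_{k_v}$ one has $p_*(P\times Q)=\deg(Q)\cdot P$; extending bilinearly gives $p_*(x_v\cdot y_v)=\deg(y_v)\,x_v=x_v$ because $\deg(y_v)=1$, and symmetrically $q_*(x_v\cdot y_v)=y_v$. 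Hence $p_*\circ\Phi=\mathrm{pr}_X$ and $q_*\circ\Phi=\mathrm{pr}_Y$ on degree-$1$ adelic cycles, which is exactly the assertion that the top row is a section.

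Next I would observe that, \emph{granting} the key assertion, the projections and this section interact well with the Brauer pairing. The projection formula $\langle p_*z,\beta\rangle=\langle z,p^*\beta\rangle$ for $\beta\in\brgp{X}$ (and its analogue for $q$) shows that if $z\in\zcyclef{1}\vA(Z)^\br$ then $p_*z\in\zcyclef{1}\vA(X)^\br$ and $q_*z\in\zcyclef{1}\vA(Y)^\br$, so the projection restricts to a well-defined map on Brauer--Manin subsets. Surjectivity of this restricted projection, and the fact that $\Phi$ followed by it is the identity, then follow formally: given $(x,y)$ with $x\in\zcyclef{1}\vA(X)^\br$ and $y\in\zcyclef{1}\vA(Y)^\br$, the cycle $\Phi(x,y)$ lies in $\zcyclef{1}\vA(Z)^\br$ by the key assertion and projects back to $(x,y)$ by the section property above. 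Thus the whole theorem rests on showing $\langle\Phi(x,y),\alpha\rangle=0$ for every $\alpha\in\brgp{Z}$.

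The core computation I would carry out is a local factorisation of the pairing. Writing $z=(x_v\cdot y_v)_v$, for each place $v$ and each closed point $P\in X_{k_v}$ with residue field $L=k_v(P)$, restriction of $\alpha$ to the fibre $p^{-1}(P)\cong Y_L$ produces a class $\alpha_P\in\brgp{Y_L}$, and $P\times y_v$ corresponds in this fibre to the base change $(y_v)_L$; using $\cor{k_v(R)/k_v}=\cor{L/k_v}\circ\cor{k_v(R)/L}$ and $\inv_{k_v}\circ\cor{L/k_v}=\inv_L$ one obtains $\langle x_v\cdot y_v,\alpha\rangle_v=\sum_P a_P\,\langle (y_v)_L,\alpha_P\rangle_L$. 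Classes in $p^*\brgp{X}+q^*\brgp{Y}+\brgp{k}$ are disposed of immediately: by the projection formula $\langle\Phi(x,y),p^*\beta\rangle=\langle x,\beta\rangle=0$ and $\langle\Phi(x,y),q^*\gamma\rangle=\langle y,\gamma\rangle=0$, while a constant class $\alpha_0$ pairs to $\sum_v\deg(z_v)\,\inv_v(\alpha_0)=\sum_v\inv_v(\alpha_0)=0$ by the reciprocity law underlying the inclusions recalled in the introduction, since $\deg(z_v)=1$. The remaining, genuinely \emph{mixed} classes are controlled by an analysis of $\brgp{Z}$ in the spirit of \cite{SZ14,Lv20}: over $\bar k$ the Künneth formula decomposes the geometric Brauer group, and the part of $\brgp{Z}$ not coming from the factors is governed by a $\mathrm{Hom}$/cup-product term through which the local pairing of $\alpha$ with $x_v\cdot y_v$ factors as a cup product of the separate local evaluations on $X$ and on $Y$; summing over all $v$ and invoking the global Brauer--Manin orthogonality of $x$ and of $y$ \emph{separately} (together with $\deg x_v=\deg y_v=1$) forces this contribution to vanish by cup-product reciprocity.

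I expect the main obstacle to be precisely the treatment of these mixed classes in the generality claimed. Without a properness hypothesis the Künneth decomposition of $\mathrm{Pic}(\bar Z)$ and of $\brgp{\bar Z}$ is not automatic and must be handled through smooth compactifications and a careful bookkeeping of the $\cohomorf{1}{k}{\mathrm{Pic}}$-type terms; and in the function field case over a $\bC((t))$-curve the reciprocity and local duality statements that make the mixed sum collapse differ from the number field case and must be replaced by the appropriate arithmetic duality in that setting. A secondary technical point, absent from the rational-point results of \cite{SZ14,Lv20}, is that the local factorisation above is stated for genuine 0-cycles rather than points, so the corestriction and base-change steps must be justified cycle-by-cycle over the residue fields $L=k_v(P)$.
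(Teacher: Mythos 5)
Your overall architecture agrees with the paper's: reduce everything, via the pushforward/projection formula (the paper's Proposition \reff{cor_1}), to the single claim that $\Phi$ carries $\zcyclef{1}\vA(X)^\br\times\zcyclef{1}\vA(Y)^\br$ into $\zcyclef{1}\vA(Z)^\br$; kill the classes in $p^*\brgp{X}+q^*\brgp{Y}+\brgp{k}$ by the projection formula and reciprocity; and isolate a ``mixed'' part of $\brgp{Z}$ to be handled separately. Your degree bookkeeping $p_*\Phi(x,y)=\deg(y)\,x=x$ and the local fibrewise factorisation of the pairing are both correct. But the treatment of the mixed classes --- which you rightly flag as the main obstacle --- is precisely where the entire content of the paper lies, and what you sketch there is not yet a proof. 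Two concrete gaps. First, the decomposition of the $n$-torsion: the paper proves (Proposition \reff{prop_decomposition}) that $\cohomorf{2}{Z,\mu_n}=p^*\cohomorf{2}{X,\mu_n}+q^*\cohomorf{2}{Y,\mu_n}+\image{\epsilon}$, where $\epsilon(\psi)=\psi_*(\cT_X\cup\cT_Y)$ for \emph{universal $n$-torsors} $\cT_X,\cT_Y$ under the finite groups of multiplicative type dual to $\cohomorf{1}{X^{sep},\mu_n}$, $\cohomorf{1}{Y^{sep},\mu_n}$. The existence of these torsors is itself a nontrivial arithmetic fact: the obstruction $\partial(\psi)$ lies in $\Sha^2(G)$ and is killed by pairing against $\Sha^1(\hat G)$ under Poitou--Tate duality, using the hypothesis $\zcyclef{1}\vA(X)^\br\neq\emptyset$ (Lemma \reff{lemma_arith} and Proposition \reff{prop_exist}). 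Nothing in your sketch produces these torsors or the lifts of the K\"unneth components that they provide.

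Second, and more seriously, ``cup-product reciprocity'' does not by itself make the mixed sum vanish. After Lemma \reff{arith2} one has $\BMcouple{\epsilon(\psi),z}=\sum_v\inv_v\bigl((\psi_*(\cT_X),x_v)\cup_v(\cT_Y,y_v)\bigr)$, i.e.\ a sum of local cup products of two \emph{adelic} classes in $P^1(\hat G_Y)$ and $P^1(G_Y)$; for arbitrary adelic classes this sum is exactly the (generally nonzero) Poitou--Tate pairing. The vanishing requires lifting $(\cT_Y,y)$ (or $(\psi_*(\cT_X),x)$) to a \emph{global} class in $\cohomorf{1}{k,G_Y}$ via the Poitou--Tate exact sequence \reff{PTex}, and the hypothesis that makes this lift possible is precisely the orthogonality $\BMcouple{a\cup\cT_Y,y}=0$ for all $a\in\cohomorf{1}{k,\hat G_Y}$ --- so the Brauer--Manin conditions on $x$ and $y$ enter through a duality-theoretic lifting argument, not through a direct reciprocity computation. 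You invoke the right hypotheses but not the mechanism that converts them into the vanishing. Finally, your anticipated difficulty with smooth compactifications and $\mathrm{Pic}(\bar Z)$ is a red herring for this route: the paper never compactifies, working instead with $\mu_n$-coefficients and the K\"unneth decompositions of $\cohomorf{1}{Z^{sep},\mu_n}$ and $\cohomorf{2}{Z^{sep},\mu_n}$ for smooth, possibly open, varieties due to Cao; the non-properness is absorbed into the integral-model condition built into the definition of $\zcyclef{}{\vA}$ and the finiteness of the local sums (Lemma \reff{lemma_welldefined}).
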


\section*{Notations}
	Here are several basic conventions and notations. 
	
	$\bullet$ A \textit{function field} in this article will always mean the function field of a projective smooth geometrically integral $\bC((t))$-curve.
	
	$\bullet$ The base field $k$ is a number field or a function field. In either case, the field $k$ has characteristic $0$ and \red{in function field case, cohomological dimension is always $\leq2$}. In either case, one can consider natural sets of places associated to $k$: in the number field case, one can take the set of all places of $k$, while in the function field case, one can take the set of closed points of the associated curve. We use the notation $\Omega_k$ to denote this set in both cases if there are no confusions.
	
	$\bullet$ If $T$ is a finite subset of $\Omega_k$, in the number field case, $\cO_T$ is the canonical integer ring associated to $S$, while in the function field case and if $T\not=\emptyset$, $\cO_T$ is the global sections of $C\setminus T$ where $C$ is the associated projective curve. Note that in the function field  case, $C\setminus T$ is affine if $T\not=\emptyset$. \red{In this paper, a finite set $T$ of places always occurs when we use limit argument, so it is harmless to assume $T\not=\emptyset$.}
	
	$\bullet$ Varieties over a fixed field $k$ in this article are assumed to be separated schemes of finite type over $k$.
	
	$\bullet$ Given a field $k$ and a $k$-variety $X$, the group $\zcycle{}(X)$ represents the abelian group of 0-cycles on $X$. If $r$ is a given integer, we also use the symbol $\zcycle{r}(X)$ to refer to the 0-cycles on $X$ of degree $r$.

	$\bullet$ Given a number field or a function field $k$, a $k$-variety $X$ and a place $v\in\Omega_k$, the group $\zcyclef{}{v}(X)$ represents the abelian group $\zcycle{}(X_v)$ where $X_v=X\otimes_kk_v$. Furthermore, we use symbol $\zcyclef{}{\vA}(X)$ to represent the subset $\big\{(z_v)_{v\in\Omega_k}\in\prodf{v\in\Omega_k}\zcyclef{}{v}(X):$ $\exists$ finite subset $S$ of $\Omega_k$ and a model $\cX_T$ of $X$ over $\cO_T$ so that for each $v\not\in T$, $z_v$ comes from a relative 0-cycle $\mathcal{Z}_v$ of $\cX_T\otimes_{\cO_T}\cO_v$ over $\cO_v$ (or a formal finite sum of integral sub-schemes of $\cX_T\otimes_{\cO_T}\cO_v$ which is finite over $\cO_v$)$\big\}$. When $X$ is proper, we have $\zcyclef{}{\vA}(X)=\prodf{v\in\Omega_k}\zcyclef{}{v}(X)$. We call elements of $\zcyclef{}{\vA}(X)$ \textit{adelic 0-cycles} on $X$ over $k$.

	$\bullet$ Given a scheme $X$, the Brauer group of $X$ will always be referred to the $\ET$ cohomology $\brgp{X}=\mathrm{H}_\et^2(X,\bG_m)$. If $X$ is a variety over a field $k$, the first Brauer group $\brgpu{X}$ of $X$ will always refer to the kernel of the homomorphism $\brgp{X}\rightarrow\brgp{X^{sep}}$ and the algebraic Brauer group $\brgpa{X}$ of $X$ will always refer to the cokernel of the homomorphism $\brgp{k}\rightarrow\brgpu{X}$.
	
	$\bullet$ Let $k$ be a number field or a function field, if $\cF$ is an object of the derived category $\mathrm{D}(k)$ of $k$, then we define $\Sha^{i}(\cF)=\kernel{\mathrm{H}_\et^i(k,\cF)\rightarrow\prodf{v\in\Omega_k}\mathrm{H}_\et^i(k_v,\cF)}$.

\section{Preliminaries}
	Assume $V$ is a variety over an arbitrary field $k$ and $z\in V$ is a closed point. Given an element $A\in\brgp{V}$, we can define a pairing $(A,z)=\cor{k(z)/k}(A(z))\in\brgp k$ where $k(z)$ is the residue field of $V$ at $z$ and $A(z)$ is the restriction of $A$ in $\brgp{k(z)}$.
	
	We may linearly eztend the second entry of pairing $(A,z)$ and obtain a pairing 
	\begin{center}
		$(\cdot,\cdot):\brgp V\times\zcycle{}(V)\rightarrow\brgp k$ 
	\end{center}by setting $(A,z)=\sum n_i.(A,z_i)$ where $z=\sum n_i.z_i$ is a 0-cycle of $V$ and each $z_i$ is a closed point of $V$.
	
	The degree of a 0-cycle $z=\sum n_i.z_i$ of a variety $V$ relatively to $k$ is 
	\begin{equation}
		\deg{z}=\sum n_i.[k(z_i):k].
	\end{equation} We call $\deg{z}$ the degree of $z$ if $k$ is clear.
	
	Let $f:W\rightarrow V$ be a morphism between two $k$-varieties. Given a 0-cycle $z$ on $W$, we have the direct image $f_*(z)$ of $z$ along $f$: It is the 0-cycle defined by formula that $f_*(\sum n_i.y_i)=\sum n_i.[k(y_i):k(x_i)].f(y_i)$. Notice that direct image does not change the degree of a 0-cycle. With this definition, one can obtain immediately the following formula 
	\begin{equation}\label{push_formula}
		(f^*(A),z)=(A,f_*(z)) 
	\end{equation} where $z$ is 0-cycle on $W$ and $A$ is an element of $\brgp V$.

	Assume $k$ is a number field or a  function field from now on.	We may define a paring 
	\begin{equation} \label{pairing}
		\zcyclef{r}{\vA}(V)\times\brgp{V}\rightarrow\mathbb{Q}/\mathbb{Z}
	\end{equation}by setting $(A,z)=\sumf{v\in\Omega_k}\inv_v(A,z_v)$ where $z=(z_v)\in\zcyclef{}{\vA}(V)$ is an adelic 0-cycle on $V$. Because of the way $\zcyclef{}{\vA}(V)$ is defined, we know that $\sumf{v\in\Omega_k}\inv_v(A,z_v)$ is a finite sum, hence, it is well-defined. We denote $\zcyclef{}\vA(V)^\br$ the left-kernel of the paring \reff{pairing} and $\zcyclef{r}\vA(V)^\br$ the subset of $\zcyclef{}\vA(V)^\br$ given by the adelic 0-cycles on $V$ of constant degree $r$. We will show later that the diagonal image of $\zcycle{r}(V)$ in $\prodf{v\in\Omega_k}\zcyclef{r}v(V)$ is contained in $\zcyclef{r}\vA(V)^\br$. See lemma \reff{lemma1}.
	
	\red{We say the Brauer--Manin obstruction for 0-cycles on $V$ \resp{of degree $r$} exists if $\zcyclef{}\vA(V)^\br=\emptyset$ \resp{$\zcyclef{r}\vA(V)^\br=\emptyset$}.}

	With these notations, we are ready to prove the theorem \reff{main2}. To do so, we will proceed in several steps. In section 3, we will prove the existence of universal $n$-torsors under the assumptions of the theorem. With the help of universal $n$-torsors, we will give a decomposition of the $n$-torsion part of the Brauer group in section 4 and at the end of section 4, we will finish the proof of the theorem with the help of this decomposition.
	
	We finish this section with following immediate  facts and claim the easy part of the theorem.

	\begin{lemma}\label{lemma1}
		The diagonal (injective) image of $\zcycle{r}(X)$ in $\prodf{v\in\Omega_k}\zcyclef{r}v(X)$ is contained in $\zcyclef{r}\vA(X)^\br$.
	\end{lemma}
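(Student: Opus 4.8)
The plan is to verify, for the diagonal image of a global $0$-cycle $z=\sum_i n_i z_i\in\zcycle{r}(X)$, the two things that membership in $\zcyclef{r}{\vA}(X)^\br$ requires: that $z$ is an adelic $0$-cycle of constant degree $r$, and that it lies in the left-kernel of the pairing \reff{pairing}.

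First I would settle the easy bookkeeping. As $z$ is supported on finitely many closed points, I can choose a finite set $T\subseteq\Omega_k$ and a model $\cX_T$ of $X$ over $\cO_T$ for which the Zariski closures of the $z_i$ are relative $0$-cycles finite over $\cO_T$; base-changing to $\cO_v$ for $v\notin T$ then realizes each $z_v$ as coming from a relative $0$-cycle of $\cX_T\otimes_{\cO_T}\cO_v$, which is precisely the integrality condition in the definition of $\zcyclef{}{\vA}(X)$. For the degree, since $(z_i)_v=\spec{k(z_i)\otimes_kk_v}$ has total degree $[k(z_i):k]$ over $k_v$, one gets $\deg_{k_v}(z_v)=\deg_k(z)=r$ at every place, so the diagonal image indeed lands in $\prodf{v\in\Omega_k}\zcyclef{r}{v}(X)$.

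The heart of the argument is reciprocity. Fix $A\in\brgp X$ and set $b:=(A,z)=\sum_i n_i\,\cor{k(z_i)/k}(A(z_i))\in\brgp k$, a single global Brauer class. The crucial compatibility is that the local pairing is the localization of $b$, namely $(A,z_v)=\mathrm{loc}_v(b)$ in $\brgp{k_v}$ for every $v$; this follows because restriction and corestriction commute with the flat base change $k\to k_v$, using the decomposition $k(z_i)\otimes_kk_v\cong\prodf{w\mid v}k(z_i)_w$ of the fibre into the places $w\mid v$ of $k(z_i)$. Granting this compatibility,
\[
\sumf{v\in\Omega_k}\inv_v(A,z_v)=\sumf{v\in\Omega_k}\inv_v\big(\mathrm{loc}_v(b)\big)=0,
\]
where the vanishing is the reciprocity law for $\brgp k$: the composite $\brgp k\to\bigoplus_{v\in\Omega_k}\brgp{k_v}\xrightarrow{\sum\inv_v}\bQ/\bZ$ is zero, coming from the Brauer--Hasse--Noether exact sequence in the number field case and from Weil's reciprocity law in the function field case. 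As $A\in\brgp X$ was arbitrary, $z$ lies in $\zcyclef{}{\vA}(X)^\br$, and together with the degree computation it lies in $\zcyclef{r}{\vA}(X)^\br$.

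The only genuinely delicate point is the base-change compatibility $(A,z_v)=\mathrm{loc}_v(b)$; the adelic membership and the degree are bookkeeping, and the final vanishing is a direct appeal to reciprocity. I would prove this compatibility by reducing to a single closed point $z_i$ and combining the splitting $k(z_i)\otimes_kk_v\cong\prodf{w\mid v}k(z_i)_w$ with the fact that $\mathrm{loc}_v\circ\cor{k(z_i)/k}=\sumf{w\mid v}\cor{k(z_i)_w/k_v}\circ\mathrm{res}$. Equivalently, one can bypass $b$ altogether and run reciprocity over each $k(z_i)$ directly, using $\inv_v\circ\cor{k(z_i)_w/k_v}=\inv_w$, so that $\sumf{v\in\Omega_k}\inv_v(A,(z_i)_v)=\sumf{w\in\Omega_{k(z_i)}}\inv_w(A(z_i))=0$.
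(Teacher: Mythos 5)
Your proposal is correct and follows essentially the same route as the paper: the key step in both is the compatibility $(A,z_v)=\psi_v((A,z))$ obtained from commuting corestriction with the base change $k\to k_v$, followed by the reciprocity law (Brauer--Hasse--Noether in the number field case, its function field analogue otherwise). You additionally spell out the adelic integrality and constant-degree bookkeeping, which the paper leaves implicit, but this does not change the argument.
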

	
	\begin{proof}
		It is clear that for each field extension $K/k$, the restriction homomorphism $\zcycle{r}(X)\rightarrow\zcycle{r}(X\otimes_kK)$ is injective, therefore, the diagonal homomorphism $\zcycle{r}(X)\rightarrow\prodf{v\in\Omega_k}\zcyclef{r}v(X)$ is injective. 
		Take $z=\sum n_i.x_i\in\zcycle{r}(X)$ and consider the commutative diagram
		\begin{equation}
			\xymatrix{
				\brgp{X}\ar[r] \ar[d]&\brgp{X_v}\ar[d]\\
				\brgp{k(x_i)}\ar[r] \ar[d]^{\text{cor.}}&\brgp{k(x_i)\otimes_kk_v}\ar[d]^{\text{cor.}}\\
				\brgp{k}\ar[r]_{\psi_v} &\brgp{k_v}}
		\end{equation}
		Let $z_v$ \resp{$x_{i,v}$} be the image of $z$ \resp{$x_i$} in $\zcyclef{r}{v}(X)$, then for each $A\in\brgp{X}$, $(A,z_v)=\sumf{i} n_i.(A,x_{i,v})=\sumf{i} n_i.\psi_v((A,x_i))=\psi_v((A,z))$ and \begin{center}
			$\sumf{v}\inv_v (A,z_v)=\sumf{v}\inv_v\Big(\psi_v((A,z))\Big)=0$
		\end{center}according to the exact sequence \begin{equation}\label{BHN}
			\xymatrix{\brgp k\ar[r]&\Dsum\brgp{k_v}\ar[r]&\bQ/\bZ\ar[r]&0}.
		\end{equation}For the number field case, it is also left-exact due to the
		Brauer--Hasse--Noether exact sequence. For the function field case, proof can be found in \cite[Prop. 2.1 (v)]{Har15}.
	\end{proof}
	
	
	
	\begin{proposition} \label{cor_1}
		Let $f:Y\rightarrow X$ be a morphism of two $k$-varieties, take an adelic 0-cycles $y\in\zcyclef{r}\vA(Y)^\br$, let $x=f_*(y)\in\zcyclef{r}\vA(X)$ be the direct image of $y$, then $x\in\zcyclef{r}\vA(X)^\br$. 
	\end{proposition}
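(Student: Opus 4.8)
The plan is to reduce everything to the local projection formula \reff{push_formula}. The statement already grants that $x=f_*(y)$ is an adelic $0$-cycle of degree $r$ on $X$ — and indeed the degree is automatic, since direct image of a $0$-cycle preserves degree — so the only thing I would need to verify is that $x$ lies in the left-kernel of the adelic pairing \reff{pairing} against $\brgp{X}$. Concretely, I would fix an arbitrary class $A\in\brgp{X}$ and aim to show $\sum_{v\in\Omega_k}\inv_v(A,x_v)=0$.

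The heart of the argument is to apply \reff{push_formula} one place at a time. By construction the adelic direct image is computed componentwise: writing $f_v\colon Y_v\to X_v$ for the base change of $f$ to $k_v$, one has $x_v=(f_v)_*(y_v)$. Let $A_v\in\brgp{X_v}$ denote the restriction of $A$. Since the formation of Brauer pullbacks commutes with the base change $k\to k_v$, the restriction of $f^*A$ to $Y_v$ equals $f_v^*(A_v)$. Hence, applying \reff{push_formula} over the field $k_v$ to the class $A_v$ and the $0$-cycle $y_v$, I would obtain $(A_v,x_v)=(A_v,(f_v)_*y_v)=(f_v^*A_v,y_v)=((f^*A)_v,y_v)$ in $\brgp{k_v}$, and therefore $\inv_v(A,x_v)=\inv_v(f^*A,y_v)$ for every place $v$.

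Summing over all places then gives $\sum_v\inv_v(A,x_v)=\sum_v\inv_v(f^*A,y_v)$. The right-hand side is exactly the value of the adelic pairing \reff{pairing} between the pulled-back class $f^*A\in\brgp{Y}$ and the adelic $0$-cycle $y$, which vanishes because $y\in\zcyclef{}\vA(Y)^\br$. Thus $\sum_v\inv_v(A,x_v)=0$ for every $A\in\brgp{X}$, which is precisely the assertion $x\in\zcyclef{r}\vA(X)^\br$.

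The computation is short because the real content is packaged into the local projection formula, so I do not expect any serious obstacle. The points needing care — the closest thing to difficulty here — are bookkeeping: confirming that the adelic direct image really is the placewise $(f_v)_*$, that $(f^*A)_v=f_v^*A_v$ (compatibility of $f^*$ with base change), and that both sums above are finite so that the reindexing $\sum_v\inv_v(A,x_v)=\sum_v\inv_v(f^*A,y_v)$ is legitimate. This last finiteness is guaranteed by the defining condition on $\zcyclef{}\vA(Y)$, which forces $(f^*A)_v$ to pair trivially with $y_v$ for all but finitely many $v$.
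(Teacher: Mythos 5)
Your proposal is correct and follows exactly the paper's own argument: apply the projection formula \reff{push_formula} place by place to get $\inv_v(A,x_v)=\inv_v(f^*A,y_v)$, then sum over $v$ and use that $y$ lies in the Brauer--Manin kernel. The extra bookkeeping you flag (compatibility of $f^*$ with base change, finiteness of the sums) is sound and only makes explicit what the paper leaves implicit.
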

	
	\begin{proof}
		It is enough to notice that for each $A\in\brgp{X}$, $\sum\inv_v(A,x_v)=\sum\inv_v(f^*(A),y_v)=0$ by applying the formula \reff{push_formula}.
	\end{proof}
	
\section{Existence of universal $n$-torsors}
	In this section, we will introduce the concept of universal $n$-torsor and give a sufficient condition for the existence of universal $n$-torsors.
	
	\begin{convention}
		In this article, we only deal with $\ET$ cohomology. So, sheaves always refer to $\ET$ sheaves and instead of symbol $\mathrm{H}_{\et}^i(X,\mathcal{F})$, we denote $\cohomorf{i}{X,\mathcal{F}}$ for convenience. If $\cF^\bullet$ is a complex of $\ET$ sheaves, then $\cohomorf i{X,\cF^\bullet}$ always refers to the hyper-cohomology of $\cF^\bullet$. 
	\end{convention}
	
	Let $S$ be an arbitrary scheme, $X$ be a scheme over $S$ and $\pi$ be the structural morphism $X\xrightarrow{\pi}S$. Let $\Delta(X)$ and $\Delta_n(X)$ be the respective mapping cones of the morphisms 
	\begin{equation}\label{definition_delta}
		\bG_{m,S}[1]\rightarrow\tau_{\leq1}R\pi_*(\bG_{m,\cX})[1]
	\end{equation}\begin{equation}
		\mu_{n,S}[1]\rightarrow\tau_{\leq1}R\pi_*(\mu_{n,\cX})[1].
	\end{equation}If $X$ and $S$ are clear, we denote $\Delta$ and $\Delta_n$ for convenience. 
	
	
	If $k$ is a number field or a function field, we have that $\Sha^1(\Delta)$ is the kernel of the homomorphism
	\begin{equation}
		\xymatrix{\brgpa{X}\ar[r]&\prodf{v\in\Omega_k}\brgpa{X_v}}.
	\end{equation}In fact, this follows from the fact that $\cohomorf1{k,\Delta}$ fits into an exact sequence\begin{equation}
	\xymatrix{\brgp{k}\ar[r]&\brgp{X}\ar[r]&\cohomorf1{k,\Delta}\ar[r]&\cohomorf3{k,\bG_{m,k}}}
	\end{equation}and $\cohomorf3{k,\bG_{m,k}}=0$ is a well-known fact. (For $k$ a number field or a function field, see \cite[Cor. I.4.21]{ADT} or \cite[Prop. 2.1. (iii)]{Har15} respectively.) Then we have diagram with two exact rows
	\begin{equation}
		\xymatrix{
			0\ar[r]&\brgpa{X}\ar[r]\ar[d]&\cohomorf1{k,\Delta}\ar[r]\ar[d]&0\ar[r]\ar[d]&0\\
			0\ar[r]&\prod\brgpa{X_v}\ar[r]^{\inv}&\prod\cohomorf1{k_v,\Delta}\ar[r]&\prod\cohomorf3{k_v,\bG_{m,k_v}}}
	\end{equation}We conclude the result by the snake lemma.
	
	\begin{lemma} \label{lem_harari}
		Let $S$ be an integral regular noetherian scheme, $X$ be an faithfully flat $S$-scheme of finite type, $G$ be a $S$-group scheme of multiplicative type, then there exists an exact sequence 
		\begin{equation}\label{complex_harari}
			\xymatrix{\cohomorf{1}{S,G}\ar[r]&\cohomorf{1}{X,G}\ar[r]^-{\chi}&\mathrm{Hom}_{\mathrm{D}(S)}(\hat{G},\Delta)\ar[r]^-{\partial}&\cohomorf{2}{S,G}\ar[r]&\cohomorf{2}{X,G}}
		\end{equation}where $\hat{G}$ is the Cartier dual of $G$ and $\mathrm{D}(S)$ is the derived category of sheaves over $S$.
	\end{lemma}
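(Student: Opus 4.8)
The plan is to apply the functor $\mathrm{Hom}_{\dercat{S}}(\hat G,-)$ to the distinguished triangle defining $\Delta$ and to identify each of the resulting terms. Since $G$ is of multiplicative type, its Cartier dual $\hat G$ is a constructible sheaf concentrated in cohomological degree $0$ which, étale-locally on $S$, is a constant finitely generated abelian group and hence admits a two-term free resolution; moreover $G=\mathcal{H}om_S(\hat G,\bG_{m,S})$. Because $k$, and therefore $S$, has characteristic $0$, the Kummer sequence shows that $\mathcal{E}xt^q_S(\hat G,\bG_{m,S})=0$ for $q>0$, so $R\mathcal{H}om_S(\hat G,\bG_{m,S})=G$ is concentrated in degree $0$. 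By the local-to-global spectral sequence for $\mathrm{Ext}$ this yields, for every $i$,
\[
\cohomorf{i}{S,G}=\mathrm{Ext}^i_S(\hat G,\bG_{m,S})=\mathrm{Hom}_{\dercat{S}}(\hat G,\bG_{m,S}[i]).
\]
The same reasoning over $X$, combined with the adjunction between $\pi^{*}$ and $R\pi_{*}$ (using that $\pi$ is flat, so $L\pi^{*}=\pi^{*}$, and that $\pi^{*}\hat G=\widehat{G_X}$), gives $\cohomorf{i}{X,G}=\mathrm{Hom}_{\dercat{X}}(\pi^{*}\hat G,\bG_{m,X}[i])=\mathrm{Hom}_{\dercat{S}}(\hat G,R\pi_{*}\bG_{m,X}[i])$.

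Next I would record a truncation observation. Writing $P=\tau_{\leq1}R\pi_{*}\bG_{m,X}$, I note that since $\hat G$ lives in degree $0$ and has projective dimension $\leq1$, the complex $R\mathcal{H}om_S(\hat G,\tau_{\geq2}C)$ lies in degrees $\geq2$ for any $C$, whence $\mathrm{Hom}_{\dercat{S}}(\hat G,(\tau_{\geq2}C)[j])=0$ for all $j\leq1$. Applying this to $C=R\pi_{*}\bG_{m,X}$ and the truncation triangle $\tau_{\leq1}C\to C\to\tau_{\geq2}C$ gives $\mathrm{Hom}_{\dercat{S}}(\hat G,P[i])=\cohomorf{i}{X,G}$ for $i\leq1$, together with an injection $\mathrm{Hom}_{\dercat{S}}(\hat G,P[2])\hookrightarrow\mathrm{Hom}_{\dercat{S}}(\hat G,C[2])=\cohomorf{2}{X,G}$.

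Finally, shifting the defining triangle of $\Delta$ by $[-1]$ gives the triangle $\bG_{m,S}\to P\to\Delta[-1]\to\bG_{m,S}[1]$, and applying $\mathrm{Hom}_{\dercat{S}}(\hat G,-)$ produces a long exact sequence. Reading off the segment around $i=1$ and substituting the identifications above yields
\[
\cohomorf{1}{S,G}\to\cohomorf{1}{X,G}\xrightarrow{\ \chi\ }\mathrm{Hom}_{\dercat{S}}(\hat G,\Delta)\xrightarrow{\ \partial\ }\cohomorf{2}{S,G}\to\mathrm{Hom}_{\dercat{S}}(\hat G,P[2]),
\]
where I have used $\Delta[-1][1]=\Delta$. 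Since the last arrow can be composed with the injection $\mathrm{Hom}_{\dercat{S}}(\hat G,P[2])\hookrightarrow\cohomorf{2}{X,G}$ without changing its kernel, the sequence remains exact at $\cohomorf{2}{S,G}$ after replacing the final term by $\cohomorf{2}{X,G}$, giving precisely the asserted five-term sequence. The connecting maps of the long exact sequence furnish $\chi$ and $\partial$.

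The step I expect to be most delicate is the degree-$2$ bookkeeping: one must verify that the resulting map $\cohomorf{2}{S,G}\to\cohomorf{2}{X,G}$ really is the natural restriction, i.e. that the composite through $\mathrm{Hom}_{\dercat{S}}(\hat G,P[2])$ agrees with functorial pullback, so that the exactness at $\cohomorf{2}{S,G}$ is genuinely about the kernel of restriction. The identifications of the first paragraph depend on being in characteristic $0$ (vanishing of higher $\mathcal{E}xt$ of $\hat G$ into $\bG_m$), and the translation of $\cohomorf{\bullet}{X,G}$ into a $\mathrm{Hom}$ over $S$ uses the faithful flatness of $\pi$, which also guarantees that the unit $\bG_{m,S}\to R\pi_{*}\bG_{m,X}$ and hence the whole construction of $\Delta$ behaves as expected.
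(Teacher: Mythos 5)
Your argument is correct in outline and is in fact the standard proof of the result the paper invokes: the paper does not reprove this lemma but simply cites \cite[Prop.~1.1]{Har13}, whose proof is exactly your strategy (identify $\cohomorf{i}{S,G}$ and $\cohomorf{i}{X,G}$ with $\mathrm{Hom}_{\dercat{S}}(\hat G,\bG_{m,S}[i])$ and $\mathrm{Hom}_{\dercat{S}}(\hat G,R\pi_*\bG_{m,X}[i])$ via $R\mathcal{H}om(\hat G,\bG_m)=G$ and adjunction, dispose of $\tau_{\geq2}$ using the two-term resolution of $\hat G$, and read off the long exact sequence of the defining triangle of $\Delta$). Your truncation bookkeeping and the injectivity of $\mathrm{Hom}_{\dercat{S}}(\hat G,P[2])\hookrightarrow\cohomorf{2}{X,G}$ are handled correctly. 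The one point to repair is the justification of the key vanishing $\mathcal{E}xt^q_S(\hat G,\bG_{m,S})=0$ for $q\geq1$: you invoke ``$S$ has characteristic $0$'', but the lemma is stated for an arbitrary integral regular noetherian $S$, and in this paper it is later applied with $S$ a nonempty open $U\subseteq\spec{\cO_k}$, whose closed points have positive residue characteristic. What the Kummer-sequence argument actually requires is that the exponent of the torsion of $\hat G$ be invertible on $S$ (for $q=1$; the case $q\geq2$ is automatic from the two-term resolution); without this, $\mathcal{E}xt^1(\bZ/n,\bG_m)$ need not vanish. This is harmless in the paper's application, where one may shrink $U$ so that $n$ is invertible, but it should be stated as a hypothesis (or the shrinking made explicit) rather than deduced from characteristic $0$. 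Also, minor point: exactness of $\pi^{*}$ on \'etale sheaves of abelian groups is automatic, so the appeal to flatness of $\pi$ for $L\pi^{*}=\pi^{*}$ is unnecessary.
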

	
	\begin{proof}
		See \cite[Prop. 1.1]{Har13}. Note that in \textit{loc. cit}, $\partial$ is induced by the canonical homomorphism \begin{equation}
			\xymatrix{R^0\mathrm{Hom}_{\mathrm{D(S)}}(\hat{G},\Delta)\ar[r]&R^2\mathrm{Hom}_{\mathrm{D(S)}}(\hat{G},\bG_m)\approx\cohomorf2{S,G}}.
		\end{equation}	
	\end{proof}
	
	
	\begin{definition}
		Let $G$ be the $k$-group variety of multiplicative type whose Cartier dual (explain) is the constant $k$-group variety corresponding to the finite group $\cohomorf{1}{X^{sep},\mu_n}$, there is a canonical morphism $\psi:\hat{G}=\Delta_n\rightarrow\Delta$ in $\mathrm{D}(k)$. We call any pre-image $\mathcal{T}_X\in\cohomorf1{X,G}$ of $\psi$ along $\chi$ (denoted in lemma \reff{lem_harari}) a universal $n$-torsor of $X$.
		
		Note that for a universal $n$-torsor to exist, "$\partial$ is the zero homomorphism" is a sufficient condition.
	\end{definition}
	
	Before we provide a sufficient condition for the existence of universal $n$-torsors, we need prove some lemmas.
	
	\begin{definition}\label{def_inv}
%
		Assume that $k$ is a number field or a function field, $X$ is a variety over $k$ so that $\zcyclef{1}{\vA}(X)\not=\emptyset$. Consider the following commutative diagram
		\begin{equation}
		\xymatrix{
				&&&\Sha^1(\Delta)\ar[d]&\\
				&\brgp{k}\ar[r]\ar[d]&\brgpu{X}\ar[r]\ar[d]&\brgpa{X}\ar[r]\ar[d]&0\\
				0\ar[r]&\prod\brgp{k_v}\ar[r]\ar[d]^{\inv}&\prod\brgpu{X_v}\ar[r]&\prod\brgpa{X_v}\ar[r]&0\\
				&Q}
		\end{equation}Here $Q$ is the cokernel of homomorphism $\brgp k\rightarrow\prodf{v\in\Omega_k}\brgp{k_v}$ and $\brgp{k_v}\rightarrow\brgpu{X_v}$ is injective due to the existence of retraction induced by an adelic 0-cycle of degree 1. According to the exact sequence \reff{BHN}, we may treat $\bQ/\bZ$ as a subgroup of $Q$. By the snake lemma, we have a homomorphism $\Sha^1(\Delta)\rightarrow Q$. In the following lemma, we will show that the image of this homomorphism is indeed contained in $\bQ/\bZ$. Thus, it defines a homomorphism $\Sha^1(\Delta)\rightarrow\bQ/\bZ$. We denote this homomomorphism $\inv$ as well.

	\end{definition}
	
	
	\begin{lemma} \label{lemma_welldefined}
		Keeping the notations in definition \reff{def_inv}, assume that $X$ is smooth and geometrically integral and $\zcyclef{1}{\vA}(X)\not=\emptyset$, then the homomorphism $\Sha^1(\Delta)\xrightarrow{\inv}\bQ/\bZ$ is well-defined.
	\end{lemma}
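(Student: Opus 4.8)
The plan is to describe the snake-lemma connecting homomorphism $\delta\colon\Sha^1(\Delta)\to Q$ of definition \reff{def_inv} (this is the map we wish to call $\inv$) by an explicit diagram chase, and then to exhibit a representative of its value that is manifestly supported on finitely many places. The one place where the hypothesis $\zcyclef{1}{\vA}(X)\neq\emptyset$ enters is the following: fix once and for all an adelic $0$-cycle $z=(z_v)_v$ of degree $1$. For each $v$ the assignment $B\mapsto(B,z_v)$ is a homomorphism $r_v\colon\brgpu{X_v}\to\brgp{k_v}$, and by the projection formula \reff{push_formula} together with $\deg{z_v}=1$ one checks that $r_v\circ\pi^*=\mathrm{id}$. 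Thus $r_v$ retracts the structural map $\pi^*\colon\brgp{k_v}\to\brgpu{X_v}$; this is exactly the retraction that forces the injectivity used in the bottom row of the diagram of definition \reff{def_inv}, so that $\delta$ is defined in the first place.

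Next I would unwind $\delta$. Take $\alpha\in\Sha^1(\Delta)\subseteq\brgpa{X}$ and lift it to a class $\beta\in\brgpu{X}$ through the surjection $\brgpu{X}\twoheadrightarrow\brgpa{X}$; write $\beta_v$ for its image in $\brgpu{X_v}$. Because $\alpha$ maps to $0$ in $\prodf{v}\brgpa{X_v}$, the family $(\beta_v)_v$ lies in the image of the injection $\prodf{v}\brgp{k_v}\hookrightarrow\prodf{v}\brgpu{X_v}$, so there is a unique $\gamma=(\gamma_v)_v\in\prodf{v}\brgp{k_v}$ with $\pi^*\gamma_v=\beta_v$, and by definition $\delta(\alpha)=[\gamma]\in Q$. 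Applying the retraction recovers $\gamma$ explicitly, since $\gamma_v=r_v(\pi^*\gamma_v)=r_v(\beta_v)=(\beta,z_v)$. Hence $\delta(\alpha)$ is represented by the family $\big((\beta,z_v)\big)_v$ of local pairings of the single global class $\beta$ against the local components of $z$.

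It remains only to verify that this family lies in $\Dsum_v\brgp{k_v}$, which is precisely the finiteness already used to make the pairing \reff{pairing} well-defined. Indeed, $\beta$ extends to a class over a model $\cX_T$ of $X$ over some $\cO_T$, and for $v\notin T$ the component $z_v$ is the image of an integral relative $0$-cycle; for such $v$ the pairing $(\beta,z_v)$ therefore factors through $\brgp{\cO_v}$ and vanishes, so $(\beta,z_v)=0$ for almost all $v$. Consequently $\gamma\in\Dsum_v\brgp{k_v}$, and since under \reff{BHN} the subgroup $\bQ/\bZ=\mathrm{coker}(\brgp{k}\to\Dsum_v\brgp{k_v})$ is exactly the image of $\Dsum_v\brgp{k_v}$ inside $Q=\mathrm{coker}(\brgp{k}\to\prodf{v}\brgp{k_v})$, we conclude $\delta(\alpha)=[\gamma]\in\bQ/\bZ$, as desired.

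The step demanding the most care is the identification $\gamma_v=(\beta,z_v)$: one must confirm that the degree-$1$ cycle genuinely inverts $\pi^*$ on the classes in play, so that the abstract snake preimage $\gamma$ coincides with the concrete evaluation pairing. Once this is in hand everything else is formal; in particular the conclusion is independent of the chosen lift $\beta$, because two lifts differ by the image of a class in $\brgp{k}$ whose localizations already vanish at almost all places by \reff{BHN}, and it is independent of the chosen degree-$1$ cycle $z$, since on classes pulled back from $\brgp{k_v}$ the pairing with any degree-$1$ cycle returns the same element.
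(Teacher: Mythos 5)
Your proposal is correct and follows essentially the same route as the paper: you use the degree-$1$ adelic $0$-cycle to produce retractions $s_v$ of $\brgp{k_v}\rightarrow\brgpu{X_v}$, identify the snake-lemma image of $\overline\alpha\in\Sha^1(\Delta)$ with the family $\big((\beta,z_v)\big)_v$, and then spread $\beta$ and $z$ out over a model $\cX_T$ so that for $v\notin T$ the pairing factors through $\brgp{\cO_v}=0$, forcing almost all components to vanish and the class to land in $\bQ/\bZ\subseteq Q$. This is precisely the paper's argument (diagram \reff{18}), with the added (correct) remarks on independence of the lift and of the chosen cycle.
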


	\begin{proof}
		Fix an adelic 0-cycle  $z=(z_v)\in\zcyclef{1}{\vA}(X)$ on $X$.  There is a non-empty finite subset $T$ of $\Omega_k$ so that there is a smooth model $\cX$ of $X$ over $\cO_T$. By the definition of adelic 0-cycles, we may enlarge $T$ so that for each $v\not\in T$, $z_v=\sum n_{v,i}.x_{v,i}$ comes from a finite formal sum $\sum n_{v,i}.R_{v,i}$ where $R_{v,i}$ is integral finite $\cO_v$-algebra, hence, free over $\cO_v$ with rank $[k(x_{v,i}):k_v]$. Notice that such a formal sum also gives a retraction $\brgp\cX\rightarrow\brgp{\cO_v}$ by corestrictions and it is compatible with the retraction $s_v$ given by $z_v$. Namely, we have a commutative diagram
		
		\begin{equation}\label{18}
			\xymatrix{
				\brgp{\cX}\ar[r]\ar[d]&\brgp{\cX\otimes_{\cO_T}\cO_v}\ar[r]\ar[d]&\brgp{\cO_v}=0\ar[d]\\
				\brgp{X}\ar[r]&\brgp{X_v}\ar[r]_{s_v}&\brgp{k_v}.}
		\end{equation}

%
%
		
		
		By the definition of the homomorphism $\Sha^1(\Delta)\rightarrow Q$, given $\overline\alpha\in\Sha^1(\Delta)$, we take a pre-image $\alpha\in\brgpu{X}$ of $\overline\alpha$. By enlarging $T$, we may assume that $\alpha$ admits a lift $\alpha_T$ in $\brgp{\cX}$. Since the restriction $\alpha_v\in\brgpu{X_v}$ is mapped to zero in $\brgpa{X_v}$, we have $\alpha_v\in\brgp{k_v}$. But for each $v\not\in T$, $\alpha_v$ coincide the image of $\alpha_T$ along the path from the left-up corner of diagram \reff{18} to the right-down corner, hence, we have $\alpha_v=0$. Thus, $(\alpha_v)_{v\in\Omega_k}\in\Dsum\brgp{k_v}$ and the invariant of $\overline\alpha$ lies in $\bQ/\bZ$.
	\end{proof}
	
	Next, we will discuss the relation between Poitou--Tate pairing and the invariant homomorphism above.
	
	Let $G$ be a $k$-group of multiplicative type, there is a perfect Poitou--Tate pairing of finite groups. 
	\begin{equation}
		\langle\,,\,\rangle_{\text{PT}}:\Sha^2(G)\times\Sha^1(\hat G)\rightarrow\bQ/\bZ.
	\end{equation}For $k$ a number field, see \cite[Thm. 5.7]{Der11}. For $k$ a function field, see \cite[Thm. 2.4]{Izq16}. It is defined as follows. Let $a\in\Sha^1(\hat{G})$ and $b\in\Sha^2(G)$ and $\cG$ be a smooth model of $G$ over a non-empty open subset $W$ of $\spec\bZ$ or the $\bC((t))$-curve $C$ associated to $k$. By a limit argument, $\cohomorf2{k,G}$ is the direct limit of the groups $\cohomorf2{U,\cG}$ where $U$ runs over non-empty open subsets of $W$. For $U$ sufficiently small, we can lift $a$ to $a'_U\in\cohomorf1{U,\hat\cG}$ where $\hat\cG=\text{Hom}(\cG,\bG_{m,W})$ and $b$ to $b_U\in\cohomorf2{U,\cG}$. For any object $\cC$ of $\text{D}(U)$, we have hyper-cohomology groups with compact support $\cohomorc i{U,\cC}$. (For $k$ a function field, it is just $\cohomorf i{U,j_!(\cC)}$. But for $k$ a number field, it is more subtle. See \cite[Section 3]{Har05} for the detailed definition.) By localization exact sequence 
	\begin{equation}
		\cdots\rightarrow\cohomorc i{U,\cC}\rightarrow\cohomorf i{U,\cC}\rightarrow\Dsumf{v\not\in U}\cohomorf i{\hat k_v,\cC}\rightarrow\cdots,
	\end{equation}since $a$ is locally trivial everywhere, $a'_U$ comes from some $a_U\in\cohomorc 1{U,\hat\cG}$ under the natural map $\cohomorc 1{U,\hat\cG}\rightarrow\cohomorf1{U,\hat\cG}$. We define $\langle b,a\rangle_{\text{PT}}$ as the cup-product $b_U\cup a_U\in\cohomorc3{U,\bG_{m,U}}\approx\bQ/\bZ$ where the last isomorphism comes from the trace map, see \cite[Prop. II.2.6]{ADT} for the number field case or \cite[Prop. 2.1 (iii)]{Har15} for the function field case. We don't know whether this definition of the Poitou--Tate pairing coincides with the classical definition in terms of cocycles, but we shall only use the fact that it leads to a perfect paring.

	\begin{lemma} \label{lemma_arith}
		Let $X$ be a smooth and geometrically integral variety over a number field or function field $k$, $G$ be $k$-group variety of multiplicative type. Assume that for each $v\in\Omega_k$, canonical homomorphism
		\begin{equation}
			\xymatrix{\cohomorf2{k_v,G}\ar[r]&\cohomorf2{X_v,G}}
		\end{equation}is injective. Let $\psi\in\mathrm{Hom}_{\mathrm{D}(k)}(\hat{G},\Delta)$ be a homomorphsim in the derived category of $k$ and $A\in\Sha^1(\hat G)$, then $\partial(\psi)\in\Sha^2(G)$ and we have \begin{equation}
		\PTcouple{\partial(\psi),A}=\inv(\psi_*(A))
		\end{equation}where $\psi_*$ is the induced homomorphism $\cohomorf1{k,\hat{G}}\rightarrow\cohomorf1{k,\Delta}$ and $\partial$ is the homomorphism appearing in lemma \reff{lem_harari}.
	\end{lemma}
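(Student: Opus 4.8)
The plan is to treat the two assertions in turn, the guiding observation being that the map $\partial$ of lemma~\reff{lem_harari} is, by construction, composition with the connecting morphism $\delta\colon\Delta\to\bG_{m,k}[2]$ of the defining triangle $\bG_{m,k}[1]\to\tau_{\leq1}R\pi_*(\bG_{m,X})[1]\to\Delta\xrightarrow{\delta}\bG_{m,k}[2]$; under the Cartier-duality identification $\hom{\mathrm{D}(k)}(\hat G,\bG_{m,k}[2])\cong\cohomorf2{k,G}$ this reads $\partial(\psi)=\delta\circ\psi$. Note also that $\psi_*(A)\in\Sha^1(\Delta)$ by functoriality of $\psi_*$ (as $A$ is locally trivial), so that $\inv(\psi_*(A))$ is defined under the standing hypothesis $\zcyclef1\vA(X)\neq\emptyset$ of definition~\reff{def_inv}. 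For local triviality of $\partial(\psi)$ I would apply lemma~\reff{lem_harari} over each $\spec k_v$ and use naturality for the base change $\spec k_v\to\spec k$. Exactness of the local sequence at $\cohomorf2{k_v,G}$ gives $\image{\partial_v}=\kernel{\cohomorf2{k_v,G}\to\cohomorf2{X_v,G}}$, which is $0$ by the injectivity hypothesis; hence $\partial_v=0$, and naturality yields $\partial(\psi)|_v=\partial_v(\psi|_v)=0$ for all $v$, i.e. $\partial(\psi)\in\Sha^2(G)$.

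For the pairing formula I would spread everything over a common model: a non-empty open $U$ (in $\spec\cO_k$ or in $C$) carrying a smooth model $\cX$ of $X$, a model $\cG$ of $G$ of multiplicative type with dual $\hat\cG$, an extension $\psi_U\colon\hat\cG\to\Delta_{\cX}$ of $\psi$, and lifts $a'_U$ of $A$ and of $\partial(\psi)$, shrinking $U$ whenever necessary. Local triviality of $A$ lets us lift it further to $a_U\in\cohomorc1{U,\hat\cG}$, while $b_U=\partial_U(\psi_U)=\delta_U\circ\psi_U$ lifts $\partial(\psi)$; by the definition of the Poitou--Tate pairing recalled above, $\PTcouple{\partial(\psi),A}=\mathrm{tr}_U(b_U\cup a_U)$. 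Writing $b_U=\delta_U\circ\psi_U$ I can commute $\psi_U$ through the cup product, $b_U\cup a_U=\delta_{U*}(\psi_{U*}(a_U))$, where $\psi_{U*}(a_U)\in\cohomorc1{U,\Delta_{\cX}}$ is a compact-support lift of $\psi_*(A)$. Hence the whole statement reduces to the following compact-support description of the invariant: for every $\beta\in\Sha^1(\Delta)$ and every compact-support lift $\gamma_U\in\cohomorc1{U,\Delta_{\cX}}$ of $\beta$, one has $\inv(\beta)=\mathrm{tr}_U(\delta_{U*}(\gamma_U))$. Applying this to $\beta=\psi_*(A)$ and $\gamma_U=\psi_{U*}(a_U)$ gives the theorem.

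To prove this identity I would chase the localization sequences for $\Delta_{\cX}$ and for $\bG_{m,U}[2]$, linked by $\delta_U$. After shrinking $U$ so that $\delta_{U*}(\beta_U)=0$ in $\cohomorf3{U,\bG_m}$ --- legitimate because its image in $\cohomorf3{k,\bG_m}=0$ vanishes --- the class $\delta_{U*}(\gamma_U)\in\cohomorc3{U,\bG_m}$ comes from $\bigoplus_{v\notin U}\cohomorf2{\hat k_v,\bG_m}=\bigoplus_{v\notin U}\brgp{\hat k_v}$. Tracking local components identifies its preimage with $(\alpha_v)_{v\notin U}$, the $\alpha_v\in\brgp{k_v}$ being precisely the local components that compute $\inv(\beta)$ in definition~\reff{def_inv} (those at $v\in U$ vanishing because $\beta$ extends over $U$). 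Since the connecting map $\bigoplus_{v\notin U}\brgp{\hat k_v}\to\cohomorc3{U,\bG_m}\xrightarrow{\mathrm{tr}_U}\bQ/\bZ$ is the sum of local invariants $\sum_{v\notin U}\inv_v$, this yields $\mathrm{tr}_U(\delta_{U*}(\gamma_U))=\sum_v\inv_v(\alpha_v)=\inv(\beta)$.

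The main obstacle is exactly this last identity, i.e. reconciling the snake-lemma definition of $\inv$ (a sum of local invariants of Brauer classes) with the trace attached to $\delta$ on compact-support cohomology. The two delicate inputs are the compatibility of $\mathrm{tr}_U\colon\cohomorc3{U,\bG_m}\xrightarrow{\sim}\bQ/\bZ$ with the local invariants along the localization sequence, and the verification that the components read off from $\delta_{U*}(\gamma_U)$ coincide with those furnished by the snake lemma of definition~\reff{def_inv}; both are diagram chases relying ultimately on the reciprocity law, but require care in matching the connecting maps.
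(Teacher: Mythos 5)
Your proposal is correct and follows essentially the same route as the paper: local triviality of $\partial(\psi)$ via the injectivity hypothesis, the identification $\partial(\psi)=w\circ\psi$ with $w\colon\Delta\to\bG_{m,k}[2]$, spreading out over a model $U$, lifting $A$ to compact-support cohomology, and reducing the pairing formula to the identity $\inv(\psi_*(A))=w_{U*}(\psi_{U*}(A_U))$, which the paper likewise establishes by a localization/snake-lemma diagram chase (its diagram of distinguished triangles with mapping cones $\cC_0,\cC_1,\cC_2$). The delicate point you flag at the end is exactly the step the paper compresses into ``by snake lemma and passing $U$ to the limit.''
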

	
	\begin{proof}
		This proof follows from the proof of \cite[Prop. 3.5]{Har13}.
		
	{We only claim the case of number field. One can obtain the proof of the case of function field by simply replacing $\spec\bZ$ by the associated $\bC((t))$-curve $C$.}
		
	Note that the image of $\partial(\psi)$ in $\cohomorf2{X,G}$ is zero by lemma \reff{lem_harari}. Since $\cohomorf2{k_v,G}\rightarrow\cohomorf2{X_v,G}$ is injective by assumption, the image of $\partial(\psi)$ in $\prod\cohomorf2{k_v,G}$ is zero. Therefore, we have $\partial(\psi)\in\Sha^2(G)$.
	
	Let $w$ be the canonical homomorphism $\Delta\rightarrow \bG_{m,k}[2]$ by definition of $\Delta$ and $\pi$ be the structural morphism $X\xrightarrow\pi \spec k$. Since exact sequence \reff{complex_harari} is obtained by applying the functor $\hom{\dercat{k}}(\hat G,\ast)$ to distinguished triangle \begin{equation}
		\xymatrix{\Delta\ar[r]^w&\bG_{m,k}[2]\ar[r]&\tau_{\leq1}R\pi_*(\bG_{m,X})[2]\ar[r]&[1]}
	\end{equation}under the isomorphism $\hom{\dercat{k}}(\hat G,\bG_{m,k}[2])=\cohomorf2{k,G}$, we have $w\circ\psi=\partial(\psi)$. 

	Let $U\subseteq\spec{\cO_k}$ be a sufficiently small non-empty open subset so that there exists a smooth $U$-scheme $\cX$ with geometrically integral fibres and the generic fibre $X=\cX\times_Uk$, and a smooth $U$-group of multiplicative type $\cG$ with the generic fibre $G=\cG\times_Uk$. Let $w_U\in\hom{\dercat{U}}(\Delta(\cX),\bG_{m,U}[2])$ be defined in the same manner of the definition of $w$. There is a canonical restriction homomorphism \begin{equation}
	\xymatrix{\hom{\dercat{U}}(\hat\cG,\Delta(\cX))\ar[r]&\hom{\dercat{U}}(\hat G,\Delta(X))}
	\end{equation}and for each $V\subseteq U$ a non-empty open subset, we obtain a commutative diagram
	\begin{equation}\xymatrix{\cohomorf1{\cX_V,\cG_V}\ar[r]\ar[d]&\hom{\dercat{U}}(\hat\cG,\Delta(\cX))\ar[r]\ar[d]&\cohomorf2{V,\cG_V}\ar[r]\ar[d]&\cohomorf2{\cX_V,\cG_V}\ar[d]\\\cohomorf1{X,G}\ar[r]&\hom{\dercat{k}}(\hat G,\Delta(X))\ar[r]&\cohomorf2{k,G}\ar[r]&\cohomorf2{X,G}.}
	\end{equation}Note that the rows of this diagram are exact by lemma \reff{lem_harari}. Passing to the limit, one obtains an isomorphism
	\begin{equation}
		\xymatrix{\mathop{\lim}\limits_{\xrightarrow[V]{}}\hom{\dercat{V}}(\hat\cG,\Delta(\cX))\ar[r]&\hom{\dercat{k}}(\hat G,\Delta(X)).}
		\end{equation}Hence, after shrinking $U$, there is a lift $\psi_U\in\hom{\dercat{U}}(\hat\cG,\Delta(\cX))$ of $\psi$ and $w_U\circ\psi_U\in\hom{\dercat{U}}(\hat\cG,\bG_{m,U}[2])=\cohomorf2{U,\cG}$ is a lift of $w\circ\psi=\partial(\psi)$.
	
	Denote $\alpha=\psi_*(A)\in\Sha^1(\Delta)$. By shrinking $U$ further, we can find a lift $A_U\in\cohomorf1{U,\hat\cG}$ of $A$. Since $A$ vanishes locally everywhere, we have $A_U\in\cohomorc1{U,\hat\cG}$. Let $\alpha_U=\psi_{U\ast}(A_U)$, then $\alpha_U$ is a lift of $\alpha$. 
	
	Consider the commutative diagram of distinguished triangles\begin{equation}
		\xymatrix{\bG_{m,U}[1]\ar[r]\ar[d]&\tau_{\leq1}Rp_{U*}(\bG_{m,\cX})[1]\ar[r]\ar[d]&\Delta(\cX)\ar[d]\\
		\Dsumf{v\not\in U} j_{v*}j^*_v\bG_{m,U}[1]\ar[r]&\Dsumf{v\not\in U} j_{v*}j^*_v\tau_{\leq1}Rp_{U*}(\bG_{m,\cX})[1]\ar[r]&\Dsumf{v\not\in U} j_{v*}j^*_v\Delta(\cX)}
		\end{equation}Let $\cC_0,\cC_1,\cC_2$ be the mapping cones of three vertical homomorphisms, respectively. The group $\cohomorf1{U,\cC_2[-1]}$ may be identified with $\cohomorc1{U,\Delta(\cX)}$. So we may identify $\alpha_U$ as an element of $\cohomorf1{U,\cC_2[-1]}$. Therefore, $w_{U*}(\alpha_U)$ is an element of $\cohomorf2{U,\cC_0[-1]}=\cohomorf1{U,\cC_0}$ and $\inv(\alpha)=w_{U*}(\alpha_U)$ by snake lemma and passing $U$ to the limit.
	
	Therefore, we have\begin{equation}
	\inv(\psi_*(A))=\inv(\alpha)=w_{U*}(\alpha_U)=w_{U*}(\psi_{U*}(A_U))=(w_{U*}\circ\psi_{U})\cup A_U
	\end{equation}and $(w_{U*}\circ\psi_{U})\cup A_U$ is exactly the definition of $\PTcouple{\partial(\psi),A}$. 
	\end{proof}

	\begin{proposition} \label{prop_exist}
		Let $k$ be a number field or function field and $X$ be a smooth and geometrically integral variety over $k$, let $n\geq2$ be a number and $G$ be the Cartier dual of $\cohomorf{1}{X^{sep},\mu_n}$. If $\zcyclef{1}\vA(X)^\br\not=\emptyset$, then $\chi$ is surjective where $\chi$ is the homomorphism defined in lemma \reff{lem_harari}. Hence, universal $n$-torsors on $X$ always exist. 

	\end{proposition}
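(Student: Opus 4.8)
The plan is to deduce the surjectivity of $\chi$ from the vanishing of the connecting homomorphism $\partial$ in the exact sequence \reff{complex_harari}: by exactness at $\hom{\dercat{k}}(\hat{G},\Delta)$ one has $\image{\chi}=\kernel{\partial}$, so $\partial=0$ forces $\chi$ to be surjective, and then the canonical morphism $\psi\colon\hat{G}=\Delta_n\to\Delta$ admits a preimage $\mathcal{T}_X\in\cohomorf{1}{X,G}$, which is a universal $n$-torsor. To prove $\partial=0$ I would pass through the Poitou--Tate pairing and reduce to showing that the invariant homomorphism $\inv\colon\Sha^1(\Delta)\to\bQ/\bZ$ of Lemma \reff{lemma_welldefined} is identically zero.

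First I would fix an adelic $0$-cycle $z=(z_v)\in\zcyclef{1}\vA(X)^\br$, which exists by hypothesis; in particular $\zcyclef{1}\vA(X)\neq\emptyset$, so $\inv$ is well-defined. As each $z_v$ has degree $1$, corestriction against $z_v$ provides a retraction of the pullback $\cohomorf{2}{k_v,G}\to\cohomorf{2}{X_v,G}$, which is therefore injective for every $v$; this is exactly the hypothesis required to apply Lemma \reff{lemma_arith}, and the same degree-$1$ retraction is what makes $\brgp{k_v}\to\brgpu{X_v}$ injective in Definition \reff{def_inv}.

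The core step, and the main obstacle, is to show $\inv\equiv0$. Take $\overline\alpha\in\Sha^1(\Delta)=\kernel{\brgpa{X}\to\prod_v\brgpa{X_v}}$ and lift it to $\alpha\in\brgpu{X}\subseteq\brgp{X}$. For every $v$ the restriction $\alpha_v$ vanishes in $\brgpa{X_v}$, so $\alpha_v=\pi_v^*(c_v)$ for a unique $c_v\in\brgp{k_v}$, and unwinding the snake-lemma construction of Definition \reff{def_inv} identifies $\inv(\overline\alpha)$ with $\sum_v\inv_v(c_v)$. Since $\deg z_v=1$, the projection formula \reff{push_formula} for $\pi_v\colon X_v\to\spec{k_v}$ gives $(\alpha,z_v)=c_v$ in $\brgp{k_v}$, hence $\inv_v(c_v)=\inv_v(\alpha,z_v)$; summing over $v$ and invoking that $z\in\zcyclef{1}\vA(X)^\br$ pairs trivially with $\alpha\in\brgp{X}$ under \reff{pairing}, I obtain $\inv(\overline\alpha)=\sum_v\inv_v(\alpha,z_v)=0$. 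The delicate point here is to match the purely cohomological definition of $\inv$ coming from the snake lemma with the cycle-theoretic Brauer--Manin pairing, and it is precisely in this identification that the strengthened hypothesis $\zcyclef{1}\vA(X)^\br\neq\emptyset$ (rather than merely $\zcyclef{1}\vA(X)\neq\emptyset$) is used.

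Finally, for an arbitrary $\psi\in\hom{\dercat{k}}(\hat{G},\Delta)$ and any $A\in\Sha^1(\hat{G})$, Lemma \reff{lemma_arith} yields both $\partial(\psi)\in\Sha^2(G)$ and the identity $\PTcouple{\partial(\psi),A}=\inv(\psi_*(A))$. Since $A$ is locally trivial and $\psi_*$ commutes with restriction to the $k_v$, the class $\psi_*(A)$ lies in $\Sha^1(\Delta)$, so $\inv(\psi_*(A))=0$ by the previous step; thus $\PTcouple{\partial(\psi),A}=0$ for all $A$. The Poitou--Tate pairing $\Sha^2(G)\times\Sha^1(\hat{G})\to\bQ/\bZ$ being perfect, this gives $\partial(\psi)=0$, and as $\psi$ was arbitrary we conclude $\partial=0$. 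Hence $\chi$ is surjective and universal $n$-torsors on $X$ exist.
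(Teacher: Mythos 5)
Your proposal is correct and follows essentially the same route as the paper: reduce surjectivity of $\chi$ to $\partial=0$ via the exact sequence of Lemma \reff{lem_harari}, use perfectness of the Poitou--Tate pairing together with Lemma \reff{lemma_arith} to reduce to $\inv(\psi_*(A))=0$, and kill the invariant by pairing a lift $\alpha\in\brgpu{X}$ against the chosen adelic $0$-cycle of degree $1$ lying in the Brauer--Manin set. Your explicit identification of $\inv(\overline\alpha)$ with $\sum_v\inv_v(\alpha,z_v)$ via the projection formula is exactly the content of the paper's appeal to the retractions $s_v$ and the snake lemma.
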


	\begin{proof}
		Notice that if $\zcyclef{1}\vA(X)^\br\not=\emptyset$, then the hypothesis of lemma \reff{lemma_arith} is satisfied. We only need to show that $\partial(\psi)=0$ for each $\psi\in\mathrm{Hom}_{\mathrm{D}(k)}(\hat{G},\Delta)$ by exact sequence \reff{complex_harari}. By Poitou--Tate exact sequence, we only need to show that for each $A\in\Sha^1(\hat S)$, $\PTcouple{\partial(\psi),A}=0$. By lemma \reff{lemma_arith}, we have $\PTcouple{\partial(\psi),A}=\inv(\psi_*(A))$. Recall that $\psi_*(A)$ lies in $\Sha^1(\Delta)\subseteq\brgpa{X}$, we take $\alpha$ as a pre-image of $\psi_*(A)$ in $\brgpu{X}$. For each $v\in\Omega_k$, let $s_v$ be a retractions of $\brgp{k_v}\rightarrow\brgpu{X_v}$ defined by some adelic 0-cycle of degree $1$ on $X$. Then we have $\inv(\psi_*(A))=\sum\inv_v(s_v(\alpha(X_v)))=0$ by snake lemma. (Here, the sum is finite due to the same reason as in the proof of lemma \reff{lemma_welldefined}) 
	\end{proof}

\section{A Decomposition of $\cohomorf{2}{Z,\mu_n}$}
	In this section, we will give a decomposition of Brauer group of $Z$ and then use it to prove Theorem \reff{main2}.

	\begin{proposition} \label{prop_decomposition}
		Let $k$ be a number field or a function field, $X$ and $Y$ are two smooth and geometrically integral varieties $k$-varieties, and $Z=X\times_k Y$ is the product of $X$ and $Y$. Denote canonical arrows between them as follows:	
				\begin{equation}
				\xymatrix{
								Z\ar[r]^{p} \ar[d]_{q}&X\ar[d]^{q_0}\\
								Y\ar[r]_{p_0}&\spec{k}.}
		\end{equation}Assume that $\zcyclef{1}{\vA}(X)\not=\emptyset$ and $\zcyclef{1}{\vA}(Y)\not=\emptyset$. Let $\cT_X$ and $\cT_Y$ be universal $n$-torsors on $X$ and $Y$ respectively. The existence of such universal $n$-torsors is guaranteed by \reff{prop_exist}. Let $G_X$ \resp{$G_Y$} be the Cartier  dual of $\cohomorf1{X^{sep},\mu_n}$ \resp{$\cohomorf1{Y^{sep},\mu_n}$}. Then we have the following decomposition
		\begin{equation}
			\cohomorf2{Z,\mu_n}=p^*\cohomorf2{X,\mu_n}+q^*\cohomorf2{Y,\mu_n}+\image{\epsilon}
		\end{equation} where $\epsilon$ is the homomorphism $\hom{k}(G_X\otimes G_Y,\mu_n)\rightarrow\cohomorf2{Z,\mu_n}$ by sending $\psi\in\hom{k}(G_X\otimes G_Y,\mu_n)$ to $\epsilon(\psi)=\psi_*(\cT_X\cup\cT_Y)$.
	\end{proposition}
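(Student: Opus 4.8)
The plan is to show that the three maps $p^{*}$, $q^{*}$ and $\epsilon$ together \emph{surject} onto $\cohomorf2{Z,\mu_n}$; since the statement only asserts an equality of subgroups (an internal sum, not a direct sum), surjectivity of $p^{*}\dsum q^{*}\dsum\epsilon$ is exactly what is needed. The whole argument compares the Hochschild--Serre spectral sequences of $Z$, $X$ and $Y$ over $k$ against the geometric K\"unneth decomposition over a separable closure $k^{sep}$, with absolute Galois group $\Gamma=\mathrm{Gal}(k^{sep}/k)$. Since $\mathrm{char}\,k=0$, the sheaf $\mu_n$ is locally constant and smooth base change applies to the projections; K\"unneth for \'etale cohomology with $\bZ/n$-coefficients then gives, in low degree and as $\Gamma$-modules (with no Tor contributions in these degrees because $\cohomorf0{X^{sep},\bZ/n}=\bZ/n$ is free), the splittings $\cohomorf1{Z^{sep},\mu_n}=p^{*}\cohomorf1{X^{sep},\mu_n}\dsum q^{*}\cohomorf1{Y^{sep},\mu_n}$ and $\cohomorf2{Z^{sep},\mu_n}=p^{*}\cohomorf2{X^{sep},\mu_n}\dsum q^{*}\cohomorf2{Y^{sep},\mu_n}\dsum M$, where the mixed summand $M$ is the image of the cup product $\cohomorf1{X^{sep},\mu_n}\otimes\cohomorf1{Y^{sep},\bZ/n}\to\cohomorf2{Z^{sep},\mu_n}$.

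First I would pin down the role of the universal torsors. Writing $\cT_X\cup\cT_Y$ for $p^{*}\cT_X\cup q^{*}\cT_Y\in\cohomorf2{Z,G_X\otimes G_Y}$, the map $\epsilon(\psi)=\psi_{*}(\cT_X\cup\cT_Y)$ lands in $\cohomorf2{Z,\mu_n}$, and, because a geometric pull-back of $p^{*}\cT_X$ is cupped with one of $q^{*}\cT_Y$, its restriction to $Z^{sep}$ lies in the mixed summand $M$. The defining property of a universal $n$-torsor is that cupping with the geometric restriction of $\cT_X$ realizes the canonical identity isomorphism $\hat G_X=\cohomorf1{X^{sep},\mu_n}$ (and similarly for $Y$); feeding this into the coefficient pairing $G_X\otimes G_Y\to\mu_n$ identifies the composite $\hom{k}(G_X\otimes G_Y,\mu_n)\xrightarrow{\epsilon}\cohomorf2{Z,\mu_n}\to M^{\Gamma}$ with the Cartier-duality isomorphism $\hom{k}(G_X\otimes G_Y,\mu_n)\cong\big(\cohomorf1{X^{sep},\mu_n}\otimes\cohomorf1{Y^{sep},\bZ/n}\big)^{\Gamma}=M^{\Gamma}$. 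Thus $\epsilon$ hits the full $\Gamma$-invariant mixed part.

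The main body is then a filtration chase. The spectral sequence $E_2^{s,t}=\cohomorf s{k,\cohomorf t{Z^{sep},\mu_n}}\Rightarrow\cohomorf{s+t}{Z,\mu_n}$ equips $\cohomorf2{Z,\mu_n}$ with a three-step filtration $F^{0}\supseteq F^{1}\supseteq F^{2}\supseteq0$. Naturality with respect to $p$ and $q$ makes the spectral sequences of $X$ and $Y$ map to that of $Z$; on $E_2^{2,0}=\cohomorf2{k,\mu_n}$ these maps are the common pull-back, and on $E_2^{1,1}=\cohomorf1{k,\cohomorf1{X^{sep},\mu_n}}\dsum\cohomorf1{k,\cohomorf1{Y^{sep},\mu_n}}$ they are the inclusions of the two summands coming from the degree-$1$ K\"unneth splitting. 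Consequently $F^{2}\subseteq\image{p^{*}}$ (constant classes pulled back from $\cohomorf2{k,\mu_n}$) and $F^{1}\subseteq\image{p^{*}}+\image{q^{*}}$, the latter by subtracting the $X$- and $Y$-components of a class in $E^{1,1}_\infty$ and landing in $F^{2}$. For the top quotient $F^{0}/F^{1}=E^{0,2}_{\infty}\subseteq\big(\cohomorf2{Z^{sep},\mu_n}\big)^{\Gamma}$, the geometric splitting $\cohomorf2{Z^{sep},\mu_n}=\cohomorf2{X^{sep},\mu_n}\dsum\cohomorf2{Y^{sep},\mu_n}\dsum M$ shows that its three components are reached respectively by $p^{*}$, $q^{*}$ and $\epsilon$. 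Given an arbitrary $\beta\in\cohomorf2{Z,\mu_n}$, I would subtract a suitable $p^{*}a+q^{*}b+\epsilon(\psi)$ to kill its image in $E^{0,2}_{\infty}$, reducing to $F^{1}\subseteq\image{p^{*}}+\image{q^{*}}$; this yields the asserted equality.

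The step I expect to be the main obstacle is the precise matching in the mixed summand: verifying, with the correct Tate twists, that cup product is compatible with the K\"unneth isomorphism and that the universal $n$-torsors realize the mixed classes as claimed, i.e.\ that $\epsilon$ induces the Cartier-duality isomorphism onto $M^{\Gamma}$. This is the one place where the new ingredient (universal $n$-torsors with $\mu_n$-coefficients, uniformly in the number field and function field cases) genuinely enters, as opposed to the formal spectral-sequence comparison, which follows the pattern of \cite{SZ14}. A secondary point requiring care is the compatibility of the $d_2$ and $d_3$ differentials with the $X$/$Y$ decomposition of $E_2^{1,1}$ and $E_2^{0,1}$, needed to guarantee that $E^{1,1}_\infty(Z)$ and $E^{0,2}_\infty(Z)$ really are generated by the images of the corresponding terms for $X$ and $Y$.
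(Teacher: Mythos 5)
Your proposal follows essentially the same route as the paper's proof: compare the Hochschild--Serre spectral sequences of $Z$, $X$, $Y$ with the K\"unneth splittings of $\cohomorf1{Z^{sep},\mu_n}$ and $\cohomorf2{Z^{sep},\mu_n}$ (the paper quotes \cite{Cao24} both for these splittings and for the fact that $\psi_*(\cT_X\cup\cT_Y)$ lifts the mixed invariant class $\psi$, so that $\delta_Z$ vanishes on the mixed summand), and then run a filtration chase showing $p^*$, $q^*$ and $\epsilon$ cover everything modulo $\image{\cohomorf2{k,\mu_n}}$. The one point you flag as unresolved --- the behaviour of the differentials landing in $E^{i,0}=\cohomorf i{k,\mu_n}$ for $i\geq3$, without which the $X$- and $Y$-components of a class in $E^{1,1}_\infty(Z)$ or $E^{0,2}_\infty(Z)$ could fail to survive separately --- is precisely where the hypothesis $\zcyclef{1}{\vA}(X)\not=\emptyset$ enters the paper's argument: an adelic $0$-cycle of degree $1$ yields retractions of $\cohomorf i{k_v,\mu_n}\rightarrow\cohomorf i{X_v,\mu_n}$, and together with the bijectivity of $\cohomorf i{k,\mu_n}\rightarrow\Dsumf{v\text{ real}}\cohomorf i{k_v,\mu_n}$ for $i\geq3$ in the number field case (resp. $\mathrm{cd}(k)\leq2$ in the function field case) this gives injectivity of $\cohomorf i{k,\mu_n}\rightarrow\cohomorf i{X,\mu_n}$ for $i\geq3$, so all such differentials vanish and the degree-$2$ line degenerates at $E_3$.
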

	
	\begin{proof}
		Let $\Gamma$ be the absolute Galois group of $k$. Recall the spectral sequence 
		\begin{equation}
			E_2^{i,j}=\cohomorf i{k,\cohomorf{j}{X^{sep},\mu_n}}\Rightarrow\cohomorf{i+j}{X,\mu_n}.
		\end{equation}We will focus on the line $i+j=2$ of this spectral sequence:
		\begin{equation}\label{ss_diag}
			\xymatrix{\cohomorf2{X^{sep},\mu_n}^\Gamma\ar[rrd]^{\delta_X}&\ast&\ast&\ast\\
			\cohomorf{1}{X^{sep},\mu_n}^\Gamma\ar[rrd]&\cohomorf1{k,\cohomorf{1}{X^{sep},\mu_n}}\ar[rrd]&\cohomorf2{k,\cohomorf{1}{X^{sep},\mu_n}}&\ast\\
			\ast&\ast&\cohomorf2{k,\mu_n}&\cohomorf3{k,\mu_n}.}
		\end{equation}Firstly, we claim that $\cohomorf i{k,\mu_n}\rightarrow\cohomorf i{X,\mu_n}$ is injective for $i\geq3$. In fact, in the case of function field, it is the direct consequence of the fact that $\text{cd}(k)\leq2$. In the case of number field, $\cohomorf i{k,\mu_n}\rightarrow\Dsumf{v\text{ real}}\cohomorf i{k_v,\mu_n}$ is bijective for $i\geq3$, see \cite[Thm. I.4.10 (c)]{ADT}. Since any adelic 0-cycle of degree 1 defines a retraction of the homomorphism $\cohomorf i{k_v,\mu_n}\rightarrow\cohomorf i{X_v,\mu_n}$ for any $p$ or any $v\in\Omega_k$, it is injective. Hence, 
		\begin{equation}
			\cohomorf i{k,\mu_n}\rightarrow\cohomorf i{X,\mu_n}\rightarrow\Dsumf{v\text{ real}}\cohomorf i{X_v,\mu_n}
		\end{equation}is injective and this implies the claim. With this claim, we obtain the triviality of all the homomorphisms in the spectral sequence \reff{ss_diag} whose target is $E^{i,0}_2=\cohomorf i{k,\mu_n}$ for $i\geq3$.
		
		Since the line $i+j=2$ is stable after page $3$ using the above claim, we obtain an exact sequence for $\cohomorf{2}{X,\mu_n}$:
		\begin{equation}
			\begin{array}{c}
			\xymatrix{
			0\ar[r]&E^{2,0}_3\ar[r]&\cohomorf2{X,\mu_n}\ar[r]&A\ar[r]&0}\\
			\xymatrix{0\ar[r]&E^{1,1}_3\ar[r]&A\ar[r]&E^{0,2}_3\ar[r]&0}
			\end{array}
		\end{equation}where $E^{2,0}_3=\cohomorf2{k,\mu_n}/\image{\cohomorf1{X^{sep},\mu_n}^\Gamma},E^{1,1}_3=\cohomorf1{k,\cohomorf1{X^{sep},\mu_n}},E^{0,2}_3=\kernel{\delta_X}$ where $\delta_X$ is the homomoprhism in diagram \reff{ss_diag}. Therefore, we obtain \newline $A=\cohomorf2{X,\mu_n}/\image{\cohomorf2{k,\mu_n}}$ and an exact sequence 
		\begin{equation}\label{ss_1}
				\xymatrix{0\ar[r]&\cohomorf1{k,\cohomorf{1}{X^{sep},\mu_n}}\ar[r]&\cohomorf2{X,\mu_n}/\image{\cohomorf2{k,\mu_n}}\ar[r]&\kernel{\delta_X}\ar[r]&0}.
		\end{equation} There are similar sequences for $Y$ and $Z$ linked by the homomorphisms $p^*$ and $q^*$.
		
		Let us denote
		\begin{equation}
			\cH=p^*\cohomorf2{X,\mu_n}+q^*\cohomorf2{Y,\mu_n}+\image{\epsilon}\subseteq\cohomorf2{Z,\mu_n}.
		\end{equation} 
		Note that the image of $\cohomorf2{k,\mu_n}$ in $\cohomorf2{Z,\mu_n}$ is contained in $\cH$. It is enough to prove that the canonical homomorphism $\cH\rightarrow\cohomorf2{Z,\mu_n}/\image{\cohomorf2{k,\mu_n}}$ is surjective.
		
		By \cite[Prop. 2.6]{Cao24}, we have an isomorphism \begin{equation}
			\cohomorf1{Z^{sep},\mu_n}\approx p^*\cohomorf1{X^{sep},\mu_n}\dsum q^*\cohomorf1{Y^{sep},\mu_n}
		\end{equation}This implies that the image of $\cohomorf1{k,\cohomorf1{Z^{sep},\mu_n}}$ in $\cohomorf2{Z,\mu_n}/\image{\cohomorf2{k,\mu_n}}$ is contained in $\cH$. By the exact sequence \reff{ss_1}, it remains to claim that each element of $\kernel{\delta_Z}$ admits a pre-image from $\cH$.
		
		By \cite[Cor. 2.7]{Cao24} and Galois descent, we have an isomorphism of $\Gamma$-modules
		\begin{equation}
			\cohomorf2{Z^{sep},\mu_n}^\Gamma\approx\cohomorf2{X^{sep},\mu_n}^\Gamma\dsum\cohomorf2{Y^{sep},\mu_n}^\Gamma\dsum\hom k(G_X\otimes G_Y,\mu_n)
		\end{equation}Each $\psi\in\hom{k}(G_X\otimes G_Y,\mu_n)$, considered as an element of $\cohomorf2{Z^{sep},\mu_n}^\Gamma$, lifts to $\psi_*(\cT_X\cup\cT_Y)\in\cohomorf2{Z,\mu_n}$ in \textit{loc. cit}. Hence $\delta_Z$ is zero on the direct summand $\hom{k}(G_X\otimes G_Y,\mu_n)$ of $\cohomorf2{Z^{sep},\mu_n}^\Gamma$ by \reff{ss_1}, so that $\delta_Z$ is the direct sum of homomorphisms
		\begin{equation}
			\begin{array}{c}
				\xymatrix{\delta_X:\cohomorf2{X^{sep},\mu_n}^\Gamma\ar[r]&\cohomorf2{k,\cohomorf1{X^{sep},\mu_n}}}\\
				\xymatrix{\delta_Y:\cohomorf2{Y^{sep},\mu_n}^\Gamma\ar[r]&\cohomorf2{k,\cohomorf1{Y^{sep},\mu_n}}}\\
				\xymatrix{\hom{k}(G_X\otimes G_Y,\mu_n)\ar[r]&0}
			\end{array}
		\end{equation}Thus $\kernel{\delta_Z}$ is the surjective image of $\cohomorf2{X,\mu_n}\dsum\cohomorf2{Y,\mu_n}\dsum\image{\epsilon}$ by sequence \reff{ss_1}.
	\end{proof} 
	
	Before we prove the theorem \reff{main2}, we need calculate some Brauer-Manin pairing at first. We begin with the following basis algebraic lemma.
	
	\begin{lemma} \label{arith2}
		Under the same hypothesis as the proposition \reff{prop_decomposition}, take $x\in\zcyclef{1}\vA(X)$,  $y\in\zcyclef{1}\vA(Y)$ and  $\psi\in\hom k(G_X\otimes G_Y,\mu_n)$, let $z=\Phi(x,y)$ be the product of adelic 0-cycles $x$ and $y$, then we have
		\begin{equation} \label{changement}
			\BMcouple{\epsilon(\psi),z}=\PTcouple{(\psi_*(\cT_X),x),(\cT_Y,y)}
		\end{equation}where $\psi_*$ is the induced homomoprhism $\cohomorf{2}{X,G_X}\rightarrow\cohomorf2{X,\hat G_Y}$  
	\end{lemma}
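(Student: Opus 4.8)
The plan is to evaluate the global Brauer--Manin pairing one place at a time, to recognise each local term as a local Tate--duality pairing, and then to reassemble these local pairings into the global Poitou--Tate pairing. First I would rewrite $\epsilon(\psi)=\psi_*(p^*\cT_X\cup q^*\cT_Y)$, so that by definition of the pairing \reff{pairing},
\begin{equation}
\BMcouple{\epsilon(\psi),z}=\sumf{v\in\Omega_k}\inv_v\big(\psi_*(p^*\cT_X\cup q^*\cT_Y),z_v\big).
\end{equation}
Since $z=\Phi(x,y)$ is a product of $0$-cycles, at each place $z_v$ is supported on points of the form $\spec{k_v(x_{v,i})\otimes_{k_v}k_v(y_{v,j})}$; using the projection formula \reff{push_formula} together with the compatibility of the cup product with pullback along $p,q$ and with corestriction, the local term should factor as the local duality pairing
\begin{equation}
\inv_v\big(\epsilon(\psi),z_v\big)=\big\langle \psi_*(\cT_X)(x_v),\,\cT_Y(y_v)\big\rangle_v,
\end{equation}
where $\psi_*(\cT_X)(x_v)\in\cohomorf1{k_v,\hat G_Y}$ and $\cT_Y(y_v)\in\cohomorf1{k_v,G_Y}$ are the corestricted evaluations of the two torsors at $x_v$ and $y_v$, and $\langle\,,\,\rangle_v$ is the local pairing deduced from $\hat G_Y\otimes G_Y\to\mu_n$.

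Summing over $v$, I would then identify $\sumf{v}\langle\,,\,\rangle_v$ with the global Poitou--Tate pairing. For this I would spread $\cT_X,\cT_Y,\psi$ and the integral models of the adelic $0$-cycles $x,y$ out over a common non-empty open $U\subseteq\spec{\cO_k}$ (respectively of the associated curve $C$), exactly as in the definition of $\PTcouple{\,,\,}$ recalled before lemma \reff{lemma_arith}. The degree-$1$ hypothesis and the integrality built into the definition of adelic $0$-cycles should guarantee that for $v\notin U$ the evaluated classes are unramified, so that the families $(\psi_*(\cT_X)(x_v))_v$ and $(\cT_Y(y_v))_v$ assemble into compact-support classes representing $(\psi_*(\cT_X),x)$ and $(\cT_Y,y)$ in the two Poitou--Tate-dual $\Sha$-groups. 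The localization exact sequence would then identify the finite sum $\sumf{v}\langle\,,\,\rangle_v$ with the compact-support cup product defining $\PTcouple{(\psi_*(\cT_X),x),(\cT_Y,y)}$, yielding the desired equality \reff{changement}. This runs parallel to the proof of lemma \reff{lemma_arith} and to \cite[Prop. 3.5]{Har13}.

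The routine but slightly delicate point in the first step is the projection-formula computation for the product cycle: one must check that passing to the product point $\spec{k_v(x_{v,i})\otimes_{k_v}k_v(y_{v,j})}$ turns the cup product of the two torsors into the cup product of their separate corestricted evaluations, while keeping track of the degrees $[k_v(x_{v,i}):k_v]$ and $[k_v(y_{v,j}):k_v]$. The main obstacle, however, is the globalization in the second step: one has to verify that the evaluated families genuinely define classes in the correct $\Sha$-groups --- this is where the degree shift and the degree-$1$ condition enter, through the localization sequence --- and that the sum of local Tate pairings matches the compact-support cup-product definition of the Poitou--Tate pairing under a single compatible choice of model $U$. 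Once the models of $\cT_X,\cT_Y,x,y$ are chosen compatibly and $U$ is shrunk enough, I expect this matching to reduce to a diagram chase of the same type as in lemma \reff{lemma_arith}.
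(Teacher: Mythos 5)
Your first step is precisely the paper's proof: reduce by bilinearity to the case where $x$ and $y$ are single closed points, write $\epsilon(\psi)=p^*(\psi_*(\cT_X))\cup q^*(\cT_Y)$, and use the projection formula together with transitivity of corestriction to show that evaluating on the product cycle supported on $\spec{k(x)\otimes_k k(y)}$ yields $(\psi_*(\cT_X),x)\cup(\cT_Y,y)$; one then applies this identity place by place. That computation is the whole content of the lemma.

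Your second step, however, rests on a misidentification of the pairing $\PTcouple{\,,\,}$ in the statement. It is \emph{not} the $\Sha^2(G)\times\Sha^1(\hat G)$ pairing defined via compact-support cup products recalled before Lemma \reff{lemma_arith}; it is the restricted-product pairing $P^1(\hat G_Y)\times P^1(G_Y)\rightarrow\bQ/\bZ$, $\big((a_v),(b_v)\big)\mapsto\sum_{v}\inv_v(a_v\cup_v b_v)$, which the paper only makes explicit in the proof of Theorem \reff{main2} (the notation is overloaded). Consequently, once the local identity $\inv_v(\epsilon(\psi),z_v)=\inv_v\big((\psi_*(\cT_X),x_v)\cup_v(\cT_Y,y_v)\big)$ is established, the equality \reff{changement} holds by definition after summing over $v$: no spreading out over $U$, no compact-support classes, and no localization sequence are needed. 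Moreover, the globalization you propose would fail as written: the local classes $(\cT_Y,y_v)\in\cohomorf1{k_v,G_Y}$ and $(\psi_*(\cT_X),x_v)\in\cohomorf1{k_v,\hat G_Y}$ are in general nonzero, so the two adelic families do not lie in $\Sha$-groups and cannot be represented by compact-support classes in the way you describe. The only global input genuinely required is that these families land in the restricted products $P^1$ (equivalently, are unramified outside a finite set, which follows from the integral models built into the definition of adelic $0$-cycles, as in the proof of Lemma \reff{lemma_welldefined}); that they then lift to global classes is used in the proof of Theorem \reff{main2} via the Poitou--Tate exact sequence, not in this lemma.
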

	
	\begin{proof}
		First consider that $x$ and $y$ are global 0-cycles on $X$ and $Y$. Since the equation \reff{changement} is bilinear in entry $x$ and $y$, we may assume $x$ and $y$ are single closed points. Since $k(x)$ is separable extension of $k$, $z$ correponds formal sum of closed points of $\spec{k(x)\otimes_kk(y)}$. Write $z=\sum z_i$ and denote $a$ the arrow $\spec{k(y)}\xrightarrow a\spec{k}$. Recall that $\epsilon(\psi)$ is the image of $\cT_X\cup\cT_Y=p^*(\cT_X)\cup q^*(\cT_Y)$  along $\cohomorf{2}{Z,G_X\otimes G_Y}\rightarrow\cohomorf2{Z,\mu_n}$ and we have $\epsilon(\psi)=p^*(\psi_*(\cT_X))\cup q^*(\cT_Y)$. Denote $\cA=\psi_*(\cT_X)$ for convenience, then we have \begin{equation}
			\cor{k(z_i)/k}(\cA(z_i)\cup\cT_Y(z_i))=\cor{k(y)/k}(\cor{k(z_i)/k(y)}(\cA(z_i))\cup\cT_Y(y))
		\end{equation}
		
		Therefore, 
			\begin{align}
			\sumf i\cor{k(z_i)/k}(\cA(z_i)\cup\cT_Y(z_i))&=\cor{k(y)/k}(\sumf i\cor{k(z_i)/k(y)}(\cA(z_i))\cup\cT_Y(y))\\
				&=\cor{k(y)/k}(a^*(\cor{k(x)/k}(\cA(x)))\cup\cT_Y(y))\\
				&=\cor{k(x)/k}(\cA(x))\cup\cor{k(y)/k}(\cT_Y(y))\\
				&=(\cA,x)\cup(\cT_Y,y).\label{a}
			\end{align}
			
			For adelic 0-cycles, with formula \reff{a}, we have
			\begin{equation}
				\BMcouple{\epsilon(\psi),z}=\sumf{i}\sumf{v\in\Omega_v}\inv_v((\cA,x_v)\cup(\cT_Y,y_v))=\PTcouple{(\psi_*(\cT_X),x),(\cT_Y,y)}.
			\end{equation}  
	\end{proof}
	
	We are  now ready to prove Theorem \reff{main2}.

	\textit{Proof of }\textbf{Theorem \reff{main2}}. 
		According to corollary \reff{cor_1}, we only need to claim that if $x\in\zcyclef{1}{\vA}(X)^\br$ and $y\in\zcyclef{1}{\vA}(Y)^\br$, then $z=\Phi(x,y)$ is contained in $\zcyclef{1}{\vA}(Z)^\br$.
		
		Since the Brauer group of a smooth variety is a torsion group, and according to proposition \reff{prop_decomposition}, for each positive integer $n$, the group $\brgp{Z}[n]=\cohomorf2{Z,\mu_n}$ is generated by $p^*\cohomorf2{X,\mu_n}$, $q^*\cohomorf2{Y,\mu_n}$ and $\image{\epsilon}$. Therefore, we only need to claim that $z$ is orthogonal to $\image{\epsilon}$.

	The local cup-product couples for $v\in\Omega_k$,
	\begin{equation}
		\cup_v:\cohomorf{1}{k_v,G_Y}\times\cohomorf{1}{k_v,\hat G_Y}\rightarrow\cohomorf2{k_v,\mu_n}
	\end{equation}give rise to the global Poitou--Tate coupling
	\begin{equation}
		\PTcouple{\,,\,}:P^1(k_v,G_Y)\times P^1(k_v,\hat G_Y)\rightarrow\bZ/n.
	\end{equation}It is a perfect coupling, moreover, we have Poitou--Tate exact sequence
		\begin{equation}\label{PTex}
			\xymatrix{\cohomorf1{k,G_Y}\ar[r]&P^1(G_Y)\ar[r]&\cohomorf1{k,\hat G_Y}^D}.
		\end{equation}See \cite[Thm. 4.10]{ADT} for the number field case and \cite[Thm. 2.7]{Izq16} for the function field case. Here for any finite group $F$, $P^1(F)$ is the restricted product of groups $\cohomorf1{k_v,F}$, for $v\in\Omega_k$, relative to sub-groups $\cohomorf1{\cO_v,F}$, where $v$ is a non-archimedean place of $k$.Take $\psi\in\hom{k}(G_X\otimes G_Y,\mu_n)$, then the Brauer--Manin coupling $\BMcouple{\epsilon(\psi),z}$ is given by the Poitou--Tate coupling $\PTcouple{(\psi_*(\cT_X),x),(\cT_Y,y)}$ by lemma \reff{arith2} where $\psi_*$ is the induced homomorphism $\cohomorf{1}{X,G_X}\rightarrow\cohomorf{1}{X,\hat G_Y}$ and $\cT_X,\cT_Y$ are universal $n$-torsors of $X,Y$ respectively. The existence of universal $n$-torsors is guaranteed by Proposition \reff{prop_exist}.
		
		For each $a\in\cohomorf1{k,\hat G_Y}$, we have $a\cup\cT_Y\in\cohomorf2{Y,\mu_n}$ and $\PTcouple{a,(\cT_Y,y)}=\BMcouple{a\cup\cT_Y,y}=0$. Hence, the element $(\cT_Y,y)$, as an element of $P^1(G_Y)$, admits a lift in $\cohomorf1{k,G_Y}$ by Poitou--Tate exact sequence \reff{PTex}. Similarly, the element $(\psi_*(\cT_X),x)$, as an element of $P^1(\hat G_Y)$, admits a lift in $\cohomorf1{k,\hat G_Y}$. Therefore, one obtains
		$\PTcouple{(\psi_*(\cT_X),x),(\cT_Y,y)}=\sumf{v\in\Omega_k}\inv_v((\psi_*(\cT_X),x)\cup_v(\cT_Y,y))=0$ by exact sequence \reff{BHN}.
	\hfill$\square$

	\vspace{3ex}
	\textbf{Acknowledgements} This work is supported by National Natural Science Foundatiotn of China No.12071448. The authors would like to thank Yang Cao for his proposal of the tool of universal $n$-torsors to obtain the main result of the paper.

	\newpage


\end{document}